\pgfplotsset{compat=newest}
\newcommand{\re}{\mathbb{R}}
\newcommand{\N}{\mathbb{N}}
\newcommand{\lmd}{\lambda}
\newcommand{\eps}{\epsilon}
\newcommand{\dt}{\delta}
\def\rank{\mbox{rank}}
\newcommand{\reff}[1]{(\ref{#1})}
\newcommand{\mc}[1]{\mathcal{#1}}
\newcommand{\st}{\mathit{s.t.}}
\newcommand{\qmod}[1]{\mbox{QM}[#1]}
\newcommand{\RN}[1]{%
  \textup{\uppercase\expandafter{\romannumeral#1}}%
}
\newcommand{\bdes}{\begin{description}}
	\newcommand{\edes}{\end{description}}
\newcommand{\bal}{\begin{align}}
\newcommand{\eal}{\end{align}}
\newcommand{\bnum}{\begin{enumerate}}
	\newcommand{\enum}{\end{enumerate}}
\newcommand{\bit}{\begin{itemize}}
	\newcommand{\eit}{\end{itemize}}
\newcommand{\bea}{\begin{eqnarray}}
\newcommand{\eea}{\end{eqnarray}}
\newcommand{\be}{\begin{equation}}
\newcommand{\ee}{\end{equation}}
\newcommand{\baray}{\begin{array}}
	\newcommand{\earay}{\end{array}}
\newcommand{\bsry}{\begin{subarray}}
	\newcommand{\esry}{\end{subarray}}
\newcommand{\bca}{\begin{cases}}
	\newcommand{\eca}{\end{cases}}
\newcommand{\bcen}{\begin{center}}
	\newcommand{\ecen}{\end{center}}
\newcommand{\bbm}{\begin{bmatrix}}
	\newcommand{\ebm}{\end{bmatrix}}
\newcommand{\bmx}{\begin{matrix}}
	\newcommand{\emx}{\end{matrix}}
\newcommand{\bpm}{\begin{bmatrix}}
	\newcommand{\epm}{\end{bmatrix}}
\newcommand{\btab}{\begin{tabular}}
	\newcommand{\etab}{\end{tabular}}
\newcommand{\MATLAB}{\textsc{Matlab}\xspace}
\newtheorem{theorem}{Theorem}[section]
\newtheorem{prop}[theorem]{Proposition}
\newtheorem{corollary}[theorem]{Corollary}
\theoremstyle{definition}
\newtheorem{example}[theorem]{Example}
\newtheorem{alg}[theorem]{Algorithm}
\newtheorem{remark}[theorem]{Remark}
\theoremstyle{plain}
\newcommand{\Rni}{\mathbb{R}^{n_{i}}}
\newcommand{\xpi}{x_i}
\newcommand{\xmi}{x_{-i}}
\newcommand{\fpi}{f_{i}}
\newcommand{\ddd}{,\ldots,}
\numberwithin{equation}{section}
\begin{document}

\title[GNEPs with Quasi-linear constraints]
{Generalized Nash equilibrium problems with Quasi-linear constraints}

\author[Jiyoung Choi]{Jiyoung~Choi}
\author[Jiawang Nie]{Jiawang~Nie}
\author[Xindong Tang]{Xindong~Tang}
\author[Suhan Zhong]{Suhan~Zhong}

\address{Jiyoung Choi, Jiawang Nie, Department of Mathematics, University of California San Diego, 9500 Gilman Drive, La Jolla, CA, USA, 92093.}
\email{jichoi@ucsd.edu, njw@math.ucsd.edu}

\address{Xindong Tang, Department of Mathematics,
Hong Kong Baptist University,
Kowloon Tong, Kowloon, Hong Kong.}
\email{xdtang@hkbu.edu.hk}

\address{Suhan Zhong, School of Mathematical Sciences,
	Shanghai Jiao Tong University, Shanghai, China, 200240.}
\email{suzhong@sjtu.edu.cn}

\subjclass[2020]{90C23, 90C33, 91A10, 65K05}
\keywords{GNE, KKT point,
pLME, Moment, SOS}
\date{}

\begin{abstract}
We study generalized Nash equilibrium problems (GNEPs)
such that objectives are polynomial functions, and each player's constraints are linear in their own strategy.
For such GNEPs, the KKT sets can be represented as unions of simpler sets
by Carath\'{e}odory's theorem. We give a convenient representation for KKT sets using partial Lagrange multiplier expressions.
This produces a set of
branch polynomial optimization problems,
which can be efficiently solved by Moment-SOS relaxations.
By doing this, we can compute all
generalized Nash equilibria or detect their nonexistence.
This method may not be very scalable to large scale GNEPs.
Numerical experiments are provided to
demonstrate the computational efficiency.
\end{abstract}

\maketitle

\section{Introduction}

The generalized Nash equilibrium problem (GNEP) is a class of games that determines strategies for a group of players so that each player's benefit cannot be improved for the given strategy of other players.
Suppose there are $N$ players, and the $i$th player's strategy is represented by
the $n_i$-dimensional real vector
$x_i\coloneqq (x_{i,1},\ldots, x_{i,n_i}) \in \re^{n_i}$.
The tuple  $x  \coloneqq  (x_1,\ldots,x_N)$
denotes the set of all players' strategies, with the total dimension
$n  \coloneqq  n_1+ \cdots + n_N .$
When the $i$th player's strategy $x_i$ is focused, for convenience, we also write
\[
x = (x_i, x_{-i}),\quad \mbox{with}\quad
x_{-i}\, \coloneqq \, (x_1,\ldots,x_{i-1},x_{i+1},\ldots,x_N).
\]
Assume the $i$th player's decision optimization problem is
\be
\label{eq:GNEP}
\mbox{F}_i(x_{-i}): \,
\left\{ \begin{array}{cl}
\min\limits_{\xpi\in \Rni}  &  \fpi(x_i,x_{-i}) \\
\st & g_i(x_i,x_{-i}) \geq 0 ,
\end{array} \right.
\ee
where $g_i: \re^n \to \re^{m_i}$ is an $m_i$-dimensional vector-valued function.
For convenience, we only consider inequality constrained GNEPs.
The discussion for GNEPs with equality constraints is quite similar.
A tuple of strategies $u = (u_1, \ldots, u_N)$ is said to be a {\it generalized Nash
equilibrium} (GNE) if each $u_i$ is a minimizer of $\mbox{F}_i(u_{-i})$.
Throughout the paper, an optimizer means a global optimizer,
unless its meaning is specified.
For each $i=1\ddd N$ and for a given $x_{-i}$, we let
$\mc{S}_i(x_{-i})$ denote the set of minimizers for $\mbox{F}_i(x_{-i})$.
Therefore, the set $\mc{S}$ of all GNEs can be written as
\be   \label{set:S}
\mc{S}\, = \, \Big \{(u_1,\ldots,u_N):\   u_i\in \mc{S}_i(u_{-i})
\,\, \mbox{for} \,\, i = 1, \ldots, N \Big \}.
\ee

In this paper, we consider a broad class of GNEPs such that
all objectives $f_i$ are polynomials in $x$,
while the optimization problem $\mbox{F}_i(x_{-i})$ has
{\it constraints that are linear in $x_i$}
(they may be polynomial in $x_{-i}$, hence we call them quasi-linear constraints).
We assume that $g_i = (g_{i,1}, \ldots, g_{i,m_i})$
is such that for each $j\in [m_i] \coloneqq \{1,\ldots, m_i\},$
it holds
\be   \label{eq:gij}
g_{i,j}(x) \,=\, a_{i,j}^Tx_i-b_{i,j}(x_{-i}),
\ee
where each $a_{i,j}$ is a constant $n_i$-dimensional real vector
and $b_{i,j}(x_{-i})$ is a scalar polynomial in $\xmi$.
For convenience, denote
\[ A_i \, \coloneqq \, \bbm a_{i,1} \ \ldots \ a_{i,m_i}\ebm^T,\quad 
b_i(x_{-i}) \, \coloneqq \,  \bbm b_{i,1}(x_{-i})\ \ldots \ b_{i,m_i}(x_{-i}) \ebm^T,\]
where the superscript $^T$ means the transpose of a matrix or vector.
Then $g_i(x) = A_ix_i - b_i(x_{-i})$ and the $i$th player's feasible set can be written as
\be   \label{eq:setXi}
X_i(\xmi) \, \coloneqq \, \left\{\xpi \in\re^{n_i}: \,  A_ix_i - b_i(x_{-i}) \,\geq\, 0 \right\}.
\ee
The entire feasible set of the GNEP is
\be\label{eq:X}
X \, \coloneqq \, \Big \{ (x_1\ddd x_N) : \,
 A_ix_i - b_i(x_{-i}) \,\geq\, 0
\,\, \text{for all} \, \,  i = 1,\ldots, N \Big \}.
\ee

The GNEP \reff{eq:GNEP} is called a \textit{Nash equilibrium problem} (NEP) if
each feasible set $X_i(x_{-i})$ is independent of $x_{-i}$.
A solution to the NEP is then called a {\it Nash equilibrium} (NE).
The GNEP is said to be {\it convex} if each
$\mbox{F}_i(x_{-i})$ is a convex optimization problem in $\xpi$
for every given $\xmi$ such that $X_i(x_{-i})\ne\emptyset$.
GNEPs were originally introduced to model economic problems.
They are now widely used in various applications, such as transportation, telecommunications, and machine learning.
We refer to \cite{Cui2021book,Facchinei2010,FacKan10,Facchinei2010book,Pang2005,Pang2011}
for applications and surveys of GNEPs.

Solving GNEPs is typically a challenging task, primarily due to the interactions among different players' strategies concerning the objectives and feasible sets.
There may be none or a finite number of distinct GNEs,
even for strictly convex NEPs \cite{Nie2020nash}.
Much earlier work exists to solve GNEPs.
Some of them apply classical nonlinear optimization methods,
such as the penalty method \cite{Ba2020,FacKan10}
and Augmented-Lagrangian method \cite{kanzow2016}.
Variational inequality and quasi-variational inequality reformulations
are also frequently used to solve GNEPs; see the work
\cite{Facchinei2010generalized,Pang2005,Schiro2013}.
The Nikaido-Isoda function type methods are proposed in
\cite{dreves2012nonsmooth,Heusinger2012}.
The ADMM-type methods for GNEPs in Hilbert spaces are introduced in \cite{Borgens2021}.
The Gauss-Seidel type methods are proposed in \cite{Facchinei2011}.
Methods based on {\it Karush-Kuhn-Tucker} (KKT) conditions
are given in \cite{dreves2011solution,facchinei2009generalized}.
Certain convexity assumptions are often needed for these methods to be guaranteed to compute a GNE.
Numerical algorithms for solving linear GNEPs are studied in \cite{dreves2016}.
It is generally quite challenging to solve nonconvex GNEPs.
As an alternative, for nonconvex GNEPs, some work aims to find quasi-NEs introduced in \cite{Cui2021book,Pang2011}.
For more detailed introductions to GNEPs, we refer to
\cite{Facchinei2010,Facchinei2010book}.

\subsection*{Contributions}
GNEPs given by polynomial or rational functions are studied in
\cite{nie2023convex,Nie2020gs,NieTangZgnep21}.
Particularly, in \cite{nie2023convex,NieTangZgnep21}, Moment-SOS
relaxation methods are proposed to find GNEs or to detect their nonexistence.
These methods require {\it Lagrange multiplier expressions} (LMEs),
which are known and convenient for some common constraints like simplices, balls, or cubes.
However, for more general constraints, LMEs are quite expensive to obtain.
In particular, for GNEPs with many linear constraints, the usage of LMEs is quite inconvenient.
In this paper, we study this kind of GNEP, which has many quasi-linear constraints.
The linear property of constraints can be used to get computationally convenient expressions for Lagrange multipliers.
This novel approach greatly improves the efficiency of solving GNEPs.

Note that $x = (x_1, \ldots, x_N)$ is a GNE if and only if
every $x_i \in \mc{S}_i(x_{-i})$. In computation,
one can relax $x_i \in \mc{S}_i(x_{-i})$ by KKT conditions.
For the $i$th player's decision problem $\mbox{F}_i(x_{-i})$,
these conditions are
\be \label{eq:KKT1}
\boxed{
\begin{array}{c}
\nabla_{x_i} f_i(x) - A_i^T \lambda_i = 0, \\
0\le\,  \lambda_{i}\,\perp\, \big( A_i x_i - b_i (x_{-i})  \big)\,  \ge 0.
\end{array}
}
\ee
In the above, $\nabla_{x_i}$ denotes the gradient in the subvector $x_i$ and
\[ \lambda_i = \bbm \lambda_{i,1} & \cdots & \lambda_{i,m_i} \ebm^T\]
is the vector of Lagrange multipliers.
The notation $\lambda_i\perp g_i$ means that $\lambda_i$ and $g_i(x)$ are perpendicular to each other.
The strategy vector $x$ is called a KKT point
if for each $i\in[N]$, there exists $\lmd_i\in\re^{m_i}$
such that $(x,\lmd_i)$ satisfies \reff{eq:KKT1}.
We remark that when
all $f_i$ and $b_i$ are generic polynomials
and each $A_i$ is a generic matrix,
the GNEP with quasi-linear constraints has finitely many KKT points.
This is shown in \cite{nie2022algebraic}.

It is usually not easy to solve \reff{eq:KKT1} directly to get a KKT point,
since there are Lagrange multiplier variables like $\lmd_i$.
To solve \reff{eq:KKT1} more efficiently,
LMEs are introduced in \cite{nie2023convex,NieTangZgnep21}.
Generally, there exists a vector function
$\tau_i(x)$ such that
\begin{equation}\label{eq:rLME1st}
\lmd_i = \tau_i(x) \, \mbox{ satisfies \reff{eq:KKT1} for every KKT point $x$. }
\end{equation}
Such $\tau_i(x)$ is called a Lagrange multiplier expression.
For $A_i \in \re^{m_i \times n_i}$, the transpose $A_i^T$ is $n_i$-by-$m_i$.
For the special case that $\rank\, A_i = m_i$,
\[
\lmd_i \, =  \, (A_i A_i^T)^{-1} A_i \nabla_{x_i} f_i(x) .
\]
However, for general cases where $\rank \, A_i < m_i$,
the above LMEs are not applicable.

When GNEPs have quasi-linear constraints,
we propose a computationally efficient way to get LMEs.
For each fixed GNE $x$, the KKT system \reff{eq:KKT1}
has a Lagrange multiplier vector $\lmd_i$
with at most $r_i \coloneqq \rank\, A_i$ nonzero entries.
This is implied by Carath\'{e}odory's theorem.
Suppose $J_i \subseteq [m_i]$ is the label set of
nonzero entries of $\lmd_i$, with the cardinality $|J_i| = r_i$.
Let $A_{i,J_i}\in\re^{r_i\times n_i}$ be the submatrix of $A_i$ whose rows are labelled by $J_i$,
and we define $\lmd_{i,J_i}, b_{i,J_i}$ respectively in a similar way.
Then \reff{eq:KKT1} implies the equation
\[
\nabla_{x_i} f_i(x) - A_{i,J_i}^T \lambda_{i,J_i} = 0.
\]
Assume $A_{i,J_i}$ is invertible (i.e., $r_i = n_i$). Then,
\[
\lambda_{i,J_i} = ( A_{i,J_i}^T )^{-1} \nabla_{x_i} f_i(x).
\]
When $r_i<n_i$, expressions of $\lambda_{i,J_i}$ are given in Subsection~\ref{sec:pLME}.
The above right-hand side is a polynomial function in $x$.
We call it a {\it partial Lagrange multiplier expression} (pLME) \cite{Nie23plme}.
Then, \reff{eq:KKT1} simplifies to
\be  \label{LME:Ji:perp}
0 \le\,  ( A_{i,J_i}^T )^{-1} \nabla_{x_i} f_i(x)  \,\perp\,
\big( A_{i,J_i} x_i - b_{i,J_i} (x_{-i})  \big)\,  \ge 0,\quad 
i=1\ddd N.
\ee
In computational practice, the label set $J_i$ is usually unknown.
However, we can enumerate all such
$J_i \subseteq [m_i]$ with $|J_i| = r_i$.
Therefore, the KKT set $\mc{K}$ can be represented as
\be  \label{set:mcK:Ji}
\mc{K} \, = \,
\bigcup_{ \substack{ J_i \subseteq [m_i], |J_i| = r_i }  } \Big\{ x\in X:  x \,\,  \mbox{satisfies} \,\, \reff{LME:Ji:perp}   \Big \} .
\ee
Using pLMEs for the KKT set, we propose a method for finding all GNEs,
for both convex and nonconvex GNEPs.
Our major contributions are:

\begin{itemize}

\item  We give a pLME representation for the KKT set as in \reff{set:mcK:Ji}.
The pLMEs are explicitly given in closed formulae.
They are computationally convenient and efficient.

\item Based on partial Lagrange multiplier expressions,
we give a method for computing GNEs for both convex and nonconvex GNEPs.
When the KKT set $\mc{K}$ is finite (this is the generic case),
we can find {\it all} GNEs or detect their nonexistence.
For non-generic GNEPs, our method is still applicable, but it may or may not get all GNEs.

\item
We remark that our method is {\it not} enumerating active constraining sets
since we only consider label sets $J_i \subseteq [m_i]$ with $|J_i| = r_i$.
The number of our enumerations depends on the gap $m_i - r_i$.
To the best of our knowledge, this is the first work to apply this approach to solve GNEPs.

\item
The proposed method is effective for solving GNEPs
with quasi-linear constraints for problems of moderate size.
However, we acknowledge that it may not be scalable to large-scale GNEPs.
It is still a computational challenge to solve large-scale GNEPs.
This is important future work.

\end{itemize}

This paper is organized as follows. Section~\ref{sec:pre} reviews notation and some basics for polynomial optimization.
Section~\ref{sec:KKTpLME} introduces partial Lagrange multiplier expressions.
In Section~\ref{sec:findGNEs}, we give algorithms for solving GNEPs.
Section~\ref{sc:pops} introduces the Moment-SOS relaxations for solving polynomial optimization.
Numerical experiments are presented in Section~\ref{sec:NumExp}.
Conclusions and some discussions are given in Section~\ref{sc:con}.

\section{Preliminaries}\label{sec:pre}
\subsection*{Notation}
The symbol $\mathbb{N}$ (resp., $\mathbb{R}$) represents
the set of nonnegative integers (resp., real numbers).
The $\mathbb{R}^n$ denotes the $n$-dimensional Euclidean space.
Let $\mathbb{R}[x]$ denote the ring of polynomials with real coefficients in $x$,
and $\mathbb{R}[x]_d$ denote its subset of polynomials
whose degrees are not greater than $d$.
For the $i$th player's strategy vector $x_i \in \mathbb{R}^{n_i}$, the $x_{i,j}$
denotes the $j$th entry of $x_i$, for $j = 1, \ldots, n_i$.
For the $i$th player's objective $f_i(x)$,  $\nabla_{x_i}f_i$
denotes its gradient with respect to $x_i$.
For an integer $n > 0$, $[n] \coloneqq \{1,2, \ldots, n\}$.
For a vector $u \in \mathbb{R}^n$, $\|u\|$ denotes the standard Euclidean norm.
The $e_i$ represents the vector of all zeros except that the $i$th entry is $1$, while $\mathbf{1}$ denotes the vector of all ones.
The symbol $\mathbf{0}_{n_1 \times n_2}$ stands for
the zero matrix of dimension $n_1 \times n_2$,
and the subscript may be omitted if the dimension is clear in the context.
For a set $T$, $|T|$ denotes its cardinality.
For $\alpha = (\alpha_1, \ldots, \alpha_n) \in \mathbb{N}^n$,
let $|\alpha| \coloneqq \alpha_1 + \cdots + \alpha_n$, $x^{\alpha} \coloneqq x_1^{\alpha_1} \cdots x_n^{\alpha_n}$
and denote the power set $\mathbb{N}^{n}_d \coloneqq \{\alpha \in \N^n: |\alpha|\leq d\}$.
The column vector of all monomials in $x$ and of degrees up to $d$ is denoted as
\be  \label{[x]d}
[x]_d \coloneqq
[ 1 \quad x_1 \quad \cdots \quad x_n \quad x_1^2 \quad x_1x_2 \quad \cdots \quad x_{n-1}x_n^{d-1} \quad x_n^d ] .
\ee
For a symmetric matrix $A$, 
the 
$A\succeq 0$ (resp., $A\succ 0$)
means that $A$ is positive semidefinite (resp., positive definite).
A property is said to hold \emph{generically} if it holds
in an open dense subset of the embedding vector space
for input data (e.g., the coefficient vectors of defining polynomials of GNEPs)
in the Zariski topology.
We refer to \cite{nie2022algebraic} for more details on genericity properties of GNEPs.

\subsection{Ideals and quadratic modules}
For a polynomial $p \in \mathbb{R}[x]$ and subsets $I,J \subseteq \mathbb{R}[x]$, define the product and the Minkowski sum
\begin{equation*}
p \cdot I \coloneqq \{pq : q \in I\}, \,\,\, I+J \coloneqq \{a+b : a \in I, b \in J\}.
\end{equation*}
A subset $I \subseteq \mathbb{R}[x]$ is an ideal of $\mathbb{R}[x]$
if $I+I \subseteq I$ and $p \cdot I \subseteq I$ for all $p \in \re[x]$.
For a tuple $h = (h_1, \ldots, h_m)$ of polynomials in $\mathbb{R}[x]$,
the ideal generated by $h$ is
\begin{equation*}
\mbox{Ideal}[h]\,  \coloneqq \, h_1 \cdot \mathbb{R}[x] + \cdots + h_m \cdot \mathbb{R}[x] .
\end{equation*}
For a degree $d\ge \deg(h)$, we denote the degree-$d$ truncation of $\mbox{Ideal}[h]$
as
\[
\mbox{Ideal}[h]_d \coloneqq h_1\cdot \re[x]_{d-\deg(h_1)}+\cdots+h_m\cdot \re[x]_{d-\deg(h_m)}.
\]
The real zero set of $h$ is
$Z(h) \coloneqq \{x \in \mathbb{R}^n: h_1(x) = \cdots = h_m(x) = 0\}.$

A polynomial $\sigma \in \mathbb{R}[x]$ is said to be a sum of squares (SOS)
if there are polynomials $p_1, \ldots, p_k \in \mathbb{R}[x]$ such that $\sigma = p_1^2 + \cdots + p_k^2$.
The set of all SOS polynomials in $x$ is denoted as $\Sigma[x]$.
In computation, we often work with the degree-$d$ truncation
$\Sigma[x]_d \coloneqq \Sigma[x] \cap \mathbb{R}[x]_d.$
For a polynomial tuple $g \coloneqq (g_1, \ldots, g_t)$, denote
$S(g) \coloneqq \{x \in \mathbb{R}^n : g_1(x) \geq 0, \ldots, g_t(x) \geq 0\}.$
The quadratic module generated by $g$ is:
\begin{equation*}
\mbox{QM}[g] \coloneqq \Sigma[x] + g_1 \cdot \Sigma[x] + \cdots + g_t \cdot \Sigma[x].
\end{equation*}
The degree-$d$ truncation of $\mbox{QM}[g]$ is similarly defined as
\begin{equation*}
\mbox{QM}[g]_d \coloneqq \Sigma[x]_d + g_1 \cdot \Sigma[x]_{d-\deg(g_1)} +  \cdots + g_t \cdot \Sigma[x]_{d-\deg(g_t)}.
\end{equation*}

If $f \in \mbox{Ideal}[h]+\mbox{QM}[g]$, then it is clear that $f \geq 0$ on $Z(h) \cap S(g)$. 
The reverse is not necessarily true.
The set $\mbox{Ideal}[h]+\mbox{QM}[g]$ is said to be \textit{archimedean}
if there exists $q \in \mbox{Ideal}[h]+\mbox{QM}[g]$ such that
$S(q)$ is a compact set. When
$\mbox{Ideal}[h]+\mbox{QM}[g]$ is archimedean, if $f>0$ on $Z(h) \cap S(g)$,
then $f \in  \mbox{Ideal}[h]+\mbox{QM}[g]$.
This conclusion is referenced as \textit{Putinar's Positivstellensatz} \cite{Putinar}.
Interestingly, if $f \geq 0$ on $Z(h) \cap S(g)$, we also have $f \in\mbox{Ideal}[h]+\mbox{QM}[g]$, under some standard optimality conditions \cite{nie2014optimality}.

\subsection{Localizing and moment matrices}
A real vector $y$ is called a \textit{truncated multi-sequence} (tms) of degree $2k$ if it is labeled as
$y \,=\, (y_\alpha)_{\alpha \in \mathbb{N}^n_{2k}}.$
For a tms $y\in\re^{\mathbb{N}^n_{2k}}$ and a polynomial
$f = \sum_{\alpha \in \mathbb{N}^n_{2k}} f_{\alpha}x^\alpha$,
define the operation
\be\label{def<f,y>}
\langle f, y \rangle \,\coloneqq\, \sum\limits_{\alpha \in \mathbb{N}^n_{2k}} f_{\alpha}y_\alpha.
\ee
In particular, if $y = [\hat{x}]_{2k}$ for some $\hat{x}\in\re^n$,
then $\langle f,y\rangle = \langle f, [\hat{x}]_{2k}\rangle = f(\hat{x})$,
where $[\hat{x}]_{2k}$ is the monomial vector as in \eqref{[x]d}.
For $q \in \mathbb{R}[x]_{2k}$ and $t  = k - \lceil\mbox{deg}(q)/2 \rceil$,
the product $q \cdot [x]_t[x]_t^T$ is a symmetric matrix polynomial of length $\binom{n+t}{t}$,
which can be expressed as
$q \cdot [x]_t[x]_t^T = \sum_{\alpha \in \N^n_{2k}} x^\alpha Q_\alpha$
for some symmetric matrices $Q_\alpha$.
For $y \in\re^{\N^n_{2k}}$, denote the matrix
$L_q^{(k)}[y] \coloneqq \sum_{\alpha \in \mathbb{N}^n_{2k}} y_\alpha Q_\alpha.$
It is called the $k$th order \textit{localizing matrix} of $q$ generated by $y$.
In particular, if $q=1$ (the constant $1$ polynomial), then
$L_q^{(k)}[y]$ is reduced to the moment matrix
$M_k[y] \,\coloneqq\, L_1^{(k)}[y].$

\subsection{Moment-SOS relaxations}
Quadratic modules, moment, and localizing matrices are useful
for solving polynomial optimization.
Consider the polynomial optimization problem:
\begin{equation}\label{eq:PolyOpt}
\left\{\begin{array}{cl}
    \min\limits_{x\in\re^n} & f(x)\\
    \st & h(x) = 0,\quad g(x)\ge 0,
\end{array}
\right.
\end{equation}
For each positive integer $k$ such that $2k\ge \max \{\deg(f),\deg(h),\deg(g) \}$, the $k$th order SOS relaxation of the above problem is:
\begin{equation}\label{eq:SOS_k}
\left\{\begin{array}{cl}
\max\limits_{\gamma\in\re} & \gamma\\
\st & f-\gamma\in \mbox{Ideal}[h]_{2k}+\qmod{g}_{2k}.
\end{array}
\right.
\end{equation}
When $\mbox{Ideal}[h]+\qmod{g}$ is archimedean, the optimal value of \eqref{eq:SOS_k} converges to that of \eqref{eq:PolyOpt}
as $k$ goes to infinity.
Problem \eqref{eq:SOS_k} is a semidefinite program,
whose dual problem is the $k$th order moment relaxation of \eqref{eq:PolyOpt}:
\begin{equation}\label{eq:mom_k}
\left\{\begin{array}{cl}
\min & \langle f, y\rangle\\
\st & y\in\re^{\N_{2k}^n},\, y_0 = 1,\,M_k[y]\succeq 0,\\
& L_{h_i}^{(k)}[y] = 0\,(i\in[m]),\\
 & L_{g_j}^{(k)}[y]\succeq 0\,(j\in[t]).
\end{array}
\right.
\end{equation}
The primal-dual pair \eqref{eq:SOS_k}--\eqref{eq:mom_k} forms
the $k$th order level of the Moment-SOS hierarchy associated with \eqref{eq:PolyOpt}.
Polynomial optimization and the Moment-SOS hierarchy are also discussed in Section~\ref{sc:pops}
with more details.
We refer to \cite{HenrionKordaLasserre2020,lasserre2015,Lau09,MPO}
for a thorough introduction to this field.

\subsection{Carath\'{e}odory's theorem and Lagrange multipliers}
Carath\'{e}odory's theorem is useful for deriving pLMEs for GNEPs.

\begin{theorem}{\cite[Theorem~1.2.4]{MPO}}\label{tm:cath}
Let $S$ be a nonempty set of a vector space $V$ of dimension $m$.
Then, every point in the conic hull of $S$ can be expressed as a conic combination
of at most $m$ points.
\end{theorem}

The following is an example of applying
Carath\'{e}odory's theorem to get pLMEs.

\begin{example}
Consider the linear programming:
\be \label{eq:LP}
\left\{\begin{array}{cl}
\min\limits_{x \in \re^{3}} & x_1 + x_2 + x_3 \\
\st & Ax\ge b,
\end{array}
\right.
\ee
where $b =
 \begin{bmatrix}
0 & 0 & 0 & 1 & 1 & 1 & 2 & 3 & 3 & 2
\end{bmatrix}^T \in \re^{10}$ and
\[
A = \begin{bmatrix}
1 & 0 & 0 & 1 & 1 & 0 & 1 & 2 & 1 & 2 \\
0 & 1 & 0 & 1 & 0 & 1 & 1 & 1 & 2 & 1 \\
0 & 0 & 1 & 0 & 1 & 1 & 2 & 0 & 0 & 1
\end{bmatrix}^T\in \re^{10\times 3}.
\]
The set of minimizers of \eqref{eq:LP} is identical to the set of KKT points.
Let $u = (1,1,0)^T.$
We verify that $u$ is the minimizer of \eqref{eq:LP} by checking the KKT conditions.
Note that
\[ Au-b = \bbm 1\ \ 1\ \ 0\ \ 1\ \ 0\ \ 0\ \ 0\ \ 0\ \ 0\ \ 1\ebm^T.\]
Since $u$ is active at six constraints labeled with
$J_1 = \{3,5,6,7,8,9\}$, if it is a KKT point, then the corresponding Lagrange multiplier vector
$\lmd = (\lmd_1\ddd \lmd_{10})$ must satisfy $\lambda_j = 0$ for all $j\notin J_1$, and
\[
A^T\lmd = \mathbf{1},\quad \lambda_j\ge 0,\quad\forall j\in J_1.
\]
The above is an underdetermined system over the nonnegative orthant,
which has multiple solutions.
By Carath\'{e}odory's theorem, it has a solution with at most three nonzero entries.
Let $J_2 = \{3,8,9\}\subseteq J_1$ and consider the following induced system
\[
\bbm 0 & 2 & 1\\0 & 1 & 2\\1 & 0 & 0\ebm\bbm \lambda_3\\ \lambda_8\\ \lambda_9\ebm
= \bbm 1\\1\\1\ebm,\quad \lambda_j\ge 0\,(\forall j\in J_2),\quad
\lambda_j = 0\,(\forall j\notin J_2).
\]
It admits a unique solution that gives the vector of Lagrange multipliers such that
\[\lmd_3 = 1, \quad \lmd_8=\lmd_9=\frac{1}{3}, \quad \lmd_j=0\,(\forall j\notin J_2). \]
This implies that $u$ is an optimizer of \eqref{eq:LP}.
\end{example}

\section{Partial Lagrange Multiplier Expressions}
\label{sec:KKTpLME}

This section discusses how to find a convenient representation for the KKT set with {\it partial Lagrange multiplier expressions} (pLMEs).
We consider GNEPs with quasi-linear constraints.
The $i$th player's decision optimization problem $\mbox{F}_i(x_{-i})$ reads
\be \label{GNEP:Axi>=bi}
\left\{\begin{array}{cl}
\min\limits_{x_i\in\re^{n_i}} & f_i(x_i, x_{-i})\\
\st & A_ix_i-b_i(x_{-i})\ge 0,
\end{array}
\right.
\ee
where $A_i \in  \re^{m_i\times n_i}$ and $b_i(x_{-i})$ is a polynomial vector in $x_{-i}$.
Recall the notation
\[
A_i \, = \, \bbm a_{i,1} & \cdots &  a_{i,m_i} \ebm^T.
\]
Since the constraints of \reff{GNEP:Axi>=bi} are linear,
for every optimizer $x_i\in \mc{S}_i(x_{-i})$, there exists a Lagrange multiplier vector
$\lambda_i = (\lambda_{i,1},\ldots, \lambda_{i,m_i})$ such that
\be \label{eq:KKT:sp}
\boxed{
\begin{array}{c}
\nabla_{x_i} f_i(x) - A_i^T \lambda_i = 0, \\
0  \le  \lambda_i  \perp ( A_ix_i-b_i(x_{-i}))  \ge  0 .
\end{array}
}
\ee
The set of all KKT points is
\be \label{eq:mcK}
\mc{K} \,=\, \left\{ x\in X\left|
\begin{array}{c}
\mbox{$\exists \, (\lambda_1,\ldots, \lambda_N)$ such that for each $i\in[N]$,}  \\
\nabla_{x_i} f_i(x) - A_i^T\lambda_i = 0 \,\, ,  \\
0\le \lambda_{i}\perp ( A_ix_i-b_i(x_{-i}) ) \ge 0
\end{array}\right.\right\}.
\ee
When $\rank\, A_i = m_i \le n_i$, we can get the following expression for $\lmd_i$:
\[\lambda_i \,=\, (A_i A_i^T)^{-1}A_i\nabla_{x_i} f_i(x).\]
When $m_i>n_i$, the above expression is not available 
since $A_i A_i^T$ is singular.
Indeed, for the case $m_i>n_i$,  a polynomial expression for $\lmd_i$
typically does not exist, since $b_i(\xmi)$ depends on $\xmi$,
but a rational expression for $\lmd_i$ always exists.
This is shown in \cite{nie2023convex,NieTangZgnep21}.
However, such an expression for $\lmd_i$ may be too complicated to be practical.
The expression becomes more complicated if the gap $m_i-n_i$ is large
(see \cite[Proposition~4.1]{Nie2019}).

We look for more convenient expressions for $\lmd_i$. Let
\be \label{rank:ri}
r_i \,\coloneqq \, \mbox{rank}\, A_i .
\ee
For each $x \in \mc{K}$, the KKT system \reff{eq:KKT:sp}
has a solution $\lmd_i$ that has at most $r_i$ nonzero entries by Carath\'{e}odory's Theorem.
This means that $m_i-r_i$ entries of such a $\lmd_i$ must be zeros.
If the label set $J_i$ of nonzero entries of $\lmd_i$ is known, then the expression for $\lmd_i$ can be simplified.
This gives a pLME for $\lmd_i$.

\subsection{The pLMEs}\label{sec:pLME}

To find pLMEs, consider the linear system
\be   \label{eq:df=Almd}
\nabla_{x_i}f_i(x) \,=\, A_i^T\lambda_i, \quad
\lambda_{i} \ge 0.
\ee
For a subset $J_i\subseteq [m_i]$,
let $A_{i,J_i}$ denote the submatrix of $A_i$
whose row labels are in $J_i$, and so is $\lmd_{i,J_i}$.
That is,
\[
A_{i,J_i}  \coloneqq  \bbm a_{i,j}^T \ebm_{j\in J_i} , \quad
\lambda_{i,J_i} \coloneqq \bbm \lambda_{i,j} \ebm_{j\in J_i}.
\]
Let $r_i$ be the rank as in \reff{rank:ri}.
Denote the set of label sets
\be   \label{eq:Pmr}
\mc{P}_i \,\coloneqq\, \Big \{J_i\subseteq [m_i]:
 |J_i| = r_i, \,\, \mbox{rank}\,A_{i,J_i} = r_i \Big \}.
\ee
Since $A_i$ is $m_i$-by-$n_i$, we know $r_i\le \min\{m_i,n_i\}$
and each $\mc{P}_i$ is nonempty.
For each $J_i\in\mc{P}_i$, the vector
$\lambda_i = (\lambda_{i,1},\ldots, \lambda_{i,m_i})\ge 0$
is said to be a {\it basic feasible solution} of \reff{eq:df=Almd}
with respect to $J_i$ if $\lambda_{i,j} = 0$ for all $j \not \in J_i$ and
\begin{equation}\label{eq:kktbs}
\nabla_{x_i}f_i(x) \,=\, \sum\limits_{j\in J_i}\lambda_{i,j} a_{i,j}
\,=\,  A_{i,J_i} ^T\lambda_{i,J_i}.
\end{equation}
Basic feasible solutions can be conveniently expressed by pLMEs.
Multiplying $A_{i,J_i}$ on both sides of \reff{eq:kktbs}, we get
\[
(A_{i,J_i}A_{i,J_i}^T)\lambda_{i,J_i} \,=\, A_{i,J_i}\nabla_{x_i} f_i(x).
\]
Since $\rank \, A_{i,J_i} = r_i$, this gives the pLME:
\be  \label{eq:pLME}
\lmd_{i,J_i} \,=\, \lambda_{i,J_i}(x) \,\coloneqq\,
(A_{i,J_i} A_{i,J_i}^T)^{-1}A_{i,J_i}\nabla_{\xpi} f_i(x).
\ee
In particular, for the case $r_i = n_i$, the above simplifies to
\be  \label{pLME:ri=ni}
\lambda_{i,J_i}(x)  \,=\,  (A_{i,J_i})^{-T}\nabla_{x_i}f_i(x).
\ee
Here, the superscript denotes the transpose of the inverse.

\begin{example}\label{ex:pLME:iri}
Consider the 2-player GNEP with
\begin{gather}
\text{F}_1(x_{-1}) :\left\{\begin{array}{cl}
\min\limits_{x_1\in\re^2} & \|x_1\|^2\\
\st & \left[\begin{array}{rr} -1 & -1 \\1 & 0\\ 0 & 1\\6 & 1
\end{array}\right]x_1 \geq \left[\begin{array}{c} -2 \\ 0 \\ 0 \\ 1 + \mathbf{1}^Tx_2 \end{array}\right],
\end{array}
\right. \\
\text{F}_2(x_{-2}) :\left\{\begin{array}{cl}
\min\limits_{x_2\in\re^2} & \|x_2\|^2\\
\st & \left[\begin{array}{rr}
-1 & 1\\1 & 0\\0 & 1\\-1 & 0\end{array}\right]x_2 \geq \left[\begin{array}{c} -2x_{1,1}+2 \\ 0 \\ 0 \\ x_{1,1}-x_{1,2}-2 \end{array}\right].
\end{array}
\right.\end{gather}
One can see that $m_1 = m_2 = 4$, $r_1 = r_2 = 2$ and
\[
\begin{array}{lll}
\mc{P}_1  = \big \{\, \{1, 2\},\, \{1,3\},\, \{1, 4\},\, \{2, 3\}, & \vline &
\mc{P}_2  = \big \{\, \{1, 2\},\, \{1,3\},\, \{1, 4\},\, \{2, 3\},\\
\qquad\quad \{2, 4\},\, \{3, 4\}\, \big \}, &\vline &
\qquad\quad \{3, 4\} \, \big \}.
\end{array}
\]
The pLMEs are given as in \reff{pLME:ri=ni}. For instance,
\begin{equation*}
\lambda_{1, \{1,4\}} = \left[\begin{array}{rr}
-1 & -1 \\ 6 & 1
\end{array}\right]^{-T} \left[\begin{array}{c}
2x_{1,1} \\ 2x_{1,2}
\end{array}\right] = \frac{2}{5}\left[\begin{array}{c}
x_{1,1} -6x_{1,2} \\ x_{1,1} - x_{1,2}
\end{array}\right],
\end{equation*}
\begin{equation*}
\lambda_{2, \{1,2\}} = \left[\begin{array}{rr}
-1 & 1 \\ 1 & 0
\end{array}\right]^{-T} \left[\begin{array}{c}
2x_{2,1} \\ 2x_{2,2}
\end{array}\right] = 2\left[\begin{array}{c}
x_{2,2} \\ x_{2,1} + x_{2,2}
\end{array}\right].
\end{equation*}
In contrast, if we do not use pLMEs,
the full Lagrange multiplier expressions as in
\cite{nie2023convex,NieTangZgnep21}
are much more complicated for this GNEP.
\end{example}

\begin{remark}
For the GNEP (\ref{eq:GNEP}) with quasi-linear constraints, the pLMEs always exist for each $i\in [N]$.
Furthermore, at every KKT point $x$, there always exist an associated Lagrange multiplier vector $\lmd_i\in\re^{m_i}$ and a label set $J_i\in \mc{P}_i$ such that $\lmd_{i,J_i}(x) = \lmd_{i,J_i}$ and $\lmd_{i,j} = 0\ (j\notin J_i)$ for all $i\in [N]$.
In other words, the pLMEs give exact values for Lagrange multipliers at every KKT point of the GNEP.
\end{remark}

For $x = (x_i,x_{-i})\in X$ and $J_i\in\mc{P}_i$,
if $\lambda_{i,J_i}(x)\ge 0$, then $x$ is a KKT point of $\mbox{F}_i(x_{-i})$,
since \reff{eq:kktbs} is satisfied for $\lambda_{i,J_i}(x)$.
Thus, $x_i$ is a KKT point of $\mbox{F}_i(x_{-i})$ if
\begin{equation}\label{eq:kidcp}
\boxed{
\begin{array}{c}
\exists J_i\in \mc{P}_i , \,
\nabla_{x_i} f_i(x) = A_{i,J_i}^T\lambda_{i,J_i}(x),\\
0  \le \lambda_{i,J_i}(x)  \perp \big( A_{i,J_i}x_i-b_{i,J_i}(x_{-i}) \big)  \ge 0 .
\end{array}
}
\end{equation}
Interestingly, the above is also necessary for
$x_i$ to be a KKT point of $\mbox{F}_i(x_{-i})$,
as shown in the following theorem.

\begin{theorem}\label{thm:Kidecomp}
For $x = (x_i,x_{-i})\in X$, the $x_i$ is a KKT point of $\mbox{F}_i(x_{-i})$
if and only if it satisfies \reff{eq:kidcp}.
\end{theorem}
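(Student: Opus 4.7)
The statement is an ``iff'', so I will split into two directions; the forward (\emph{if}) direction is essentially bookkeeping, while the backward (\emph{only if}) direction is the substantive content and will rely on Carath\'{e}odory's theorem applied to conic combinations.

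\textbf{The \emph{if} direction.} Suppose $x = (x_i,x_{-i}) \in X$ satisfies \reff{eq:kidcp} for some $J_i \in \mc{P}_i$. I will define a full-length multiplier vector $\lmd_i \in \re^{m_i}$ by padding $\lmd_{i,J_i}(x)$ with zeros on $[m_i]\setminus J_i$. Then:
\begin{itemize}
\item The stationarity $\nabla_{x_i} f_i(x) = A_i^T \lmd_i$ reduces to $\nabla_{x_i} f_i(x) = A_{i,J_i}^T \lmd_{i,J_i}(x)$, which is built into the definition of $\lmd_{i,J_i}(x)$ via \reff{eq:pLME} (multiply through $A_{i,J_i}$ and use that $A_{i,J_i}A_{i,J_i}^T$ is invertible because $J_i \in \mc{P}_i$).
\item Nonnegativity and complementary slackness on $J_i$ are given by \reff{eq:kidcp}; on $[m_i]\setminus J_i$ both are trivial since $\lmd_{i,j}=0$ there.
\item Primal feasibility $A_i x_i - b_i(x_{-i}) \ge 0$ holds because $x \in X$.
\end{itemize}

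\textbf{The \emph{only if} direction.} Assume $x_i$ is a KKT point of $\mbox{F}_i(x_{-i})$, so there is some $\lmd_i \ge 0$ with $\nabla_{x_i}f_i(x) = A_i^T \lmd_i = \sum_{j=1}^{m_i} \lmd_{i,j} \mathbf{a}_{i,j}$ and $\lmd_{i,j} \cdot \big(\mathbf{a}_{i,j}^T x_i - b_{i,j}(x_{-i})\big) = 0$ for every $j$. Let $I^+ \coloneqq \{j \in [m_i] : \lmd_{i,j} > 0\}$, so $\nabla_{x_i}f_i(x)$ is a conic combination of $\{\mathbf{a}_{i,j} : j \in I^+\}$. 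By Carath\'{e}odory's theorem for convex cones, there is a subset $J' \subseteq I^+$ with $\{\mathbf{a}_{i,j}\}_{j \in J'}$ linearly independent and nonnegative scalars $\widetilde{\lmd}_j$ (for $j \in J'$) such that
\[
\nabla_{x_i} f_i(x) \,=\, \sum_{j \in J'} \widetilde{\lmd}_j\, \mathbf{a}_{i,j} .
\]
Linear independence forces $|J'| \le r_i$. If $|J'| < r_i$, extend $J'$ to some $J_i \subseteq [m_i]$ of cardinality $r_i$ with $\rank(A_{i,J_i}) = r_i$; this is possible because $\rank(A_i)=r_i$, so the row space of $A_i$ is $r_i$-dimensional and $\{\mathbf{a}_{i,j}\}_{j \in J'}$ can always be completed to a basis of this row space using rows of $A_i$. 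Thus $J_i \in \mc{P}_i$.

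\textbf{Finishing.} Set $\lmd'_{i,J_i}\in\re^{r_i}$ with entries $\widetilde{\lmd}_j$ on $J'$ and $0$ on $J_i \setminus J'$; then $\lmd'_{i,J_i} \ge 0$ and $\nabla_{x_i}f_i(x) = A_{i,J_i}^T \lmd'_{i,J_i}$. Since $A_{i,J_i}^T \in \re^{n_i \times r_i}$ has full column rank $r_i$, this linear equation admits a unique solution in $\lmd'_{i,J_i}$, and multiplying by $(A_{i,J_i}A_{i,J_i}^T)^{-1} A_{i,J_i}$ shows it equals exactly $\lmd_{i,J_i}(x)$ from \reff{eq:pLME}. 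Hence $\lmd_{i,J_i}(x) = \lmd'_{i,J_i} \ge 0$. For complementary slackness in \reff{eq:kidcp}: if $j \in J' \subseteq I^+$, then $\lmd_{i,j}>0$ in the original KKT multiplier so $\mathbf{a}_{i,j}^T x_i = b_{i,j}(x_{-i})$; if $j \in J_i \setminus J'$, then the $j$-entry of $\lmd_{i,J_i}(x)$ vanishes. In both cases the product is zero, yielding \reff{eq:kidcp}.

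\textbf{Where the difficulty sits.} The only nontrivial step is the Carath\'{e}odory reduction together with the extension of $J'$ to an index set of size \emph{exactly} $r_i$ lying in $\mc{P}_i$; this is what lets one replace the \emph{a priori} unknown support of the KKT multiplier by a canonical choice indexed over a finite, explicit combinatorial family, which is the whole point of the pLME approach.
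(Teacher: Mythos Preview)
Your proof is correct and follows the same Carath\'{e}odory-based route as the paper; in fact you spell out two points the paper leaves implicit, namely the extension of the Carath\'{e}odory subset $J'$ to an index set $J_i$ of \emph{exact} cardinality $r_i$ lying in $\mc{P}_i$, and the verification of complementary slackness via the inclusion $J' \subseteq I^+$. One small wording issue in the \emph{if} direction: you say the stationarity $\nabla_{x_i} f_i(x) = A_{i,J_i}^T \lambda_{i,J_i}(x)$ is ``built into the definition'' of the pLME, but the formula \reff{eq:pLME} by itself only yields the normal equations $A_{i,J_i}A_{i,J_i}^T\lambda_{i,J_i}(x)=A_{i,J_i}\nabla_{x_i}f_i(x)$ (which need not recover $\nabla_{x_i}f_i(x)$ when $r_i<n_i$); the equality you need is an explicit hypothesis in \reff{eq:kidcp}, so you should simply cite it rather than derive it.
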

\begin{proof}
Suppose \reff{eq:kidcp} is satisfied.
Let $\lambda_i$ be the extension of $\lambda_{i,J_i}(x)$
by adding zero entries. Then $(x,\lambda_i)$ satisfies the KKT system \reff{eq:KKT:sp},
so $x_i$ is a KKT point of $\mbox{F}_i(x_{-i})$.
Conversely, suppose $x_i$ is a KKT point of $\mbox{F}_i(x_{-i})$.
Then there exists $\lmd_i$ satisfying \reff{eq:KKT:sp}.
So, the solution set for the linear system \reff{eq:df=Almd} is nonempty.
By Carath\'{e}odory's Theorem, $\nabla_{x_i}f_i(x)$ can be represented as a conic combination of linearly independent vectors from $A_i^T$. Thus, a
basic feasible solution must exist for \reff{eq:df=Almd}.
This means that \reff{eq:kidcp} holds.
\end{proof}

\subsection{Expression of the KKT set}

For the label sets $\mc{P}_1,\ldots,\mc{P}_N$ as in \reff{eq:Pmr}, define the Cartesian product
\be   \label{eq:setP}
\mc{P} \,\coloneqq\, \mc{P}_1 \times \cdots \times \mc{P}_N.
\ee
Table~\ref{table:numberP} shows some typical instances of $|\mathcal{P}|$
when  each $A_i$ is a generic matrix such that $\rank\, A_i = n_i$ and every $b_i$ is a generic polynomial in $\xmi$.
In the table, $|\mc{A}|$ denotes the number
of all possibilities of active constraints.
One can see that $|\mc{P}| \ll |\mc{A}|$.
Moreover, when $A_i$ are not generic, the actual values of $|\mathcal{P}|$ are smaller than or equal to the values in Table~\ref{table:numberP}, while the values of $|\mathcal{A}|$ may be greater than those shown in the table due to the degeneracy of constraints.
\begin{table}[htb]
\centering
\caption{Some examples of $|\mc{P}|$ and $|\mc{A}|$ when $\rank\, A_i = n_i$ for all $i$.}
\label{table:numberP}
\begin{tabular}{|c|c|c||c|c|c|}
	\hline
 $\begin{array}{c}
   (n_1, \ldots, n_N) \\
   (m_1, \ldots, m_N)
\end{array}$ & $|\mc{P}|$ & $|\mc{A}|$  & $ \begin{array}{c}
   (n_1, \ldots, n_N)\\
   (m_1, \ldots, m_N)
\end{array}$ & $ |\mc{P}|$ & $ |\mc{A}|$ \\ \hline
$\begin{array}{c}
     (2,2)\\(5,5)
\end{array}$ & $ 100$ & $ 256 $ & $\begin{array}{c}
     (2,2,2)\\(5,5,5)
\end{array}$ & $ 1000$ & $ 4096$ \\ \hline
$\begin{array}{c}
     (2,4)\\(5,7)
\end{array}$ & $ 350$ & $ 1584 $ & $\begin{array}{c}
     (1,2,3)\\(2,3,4)
\end{array}$ & $24$ & $ 315 $ \\ \hline
$\begin{array}{c}
    (2,4)\\(4,8)
\end{array}$ & $ 420$ & $ 1793 $ & $ \begin{array}{c}
     (3,3,3)\\(6,6,6)
\end{array}$ & $8000$ & $ 74088 $ \\ \hline
$\begin{array}{c}
     (3,3) \\ (5,5)
\end{array}$ & $ 100$ & $ 676$ & $  \begin{array}{c}
    (2,2,2,2)\\(4,4,4,4)
\end{array}$ & $1296$ & $ 14641$ \\ \hline
$\begin{array}{c}
     (4,4) \\ (7,7)
\end{array}$ & $ 1225$ & $ 9801 $ & $ \begin{array}{c}
(3,3,3,3)\\(5,5,5,5)
\end{array}$ & $10000$ & $456976$ \\ \hline
\end{tabular}
\end{table}

For a tuple $J = (J_1,\ldots, J_N) \in  \mc{P}$ with each $J_i\in\mc{P}_i$,
let $\lambda_{i,J_i}(x)$ be the pLME given by \reff{eq:pLME}
and define the set
\be\label{eq:KJ} \mc{K}_J   \, \coloneqq  \,  \left\{ x\in X  \, \left|\baray{c}
  \nabla_{\xpi} f_i(x) -  A_{i,J_i}^T \lambda_{i,J_i}(x)=0,\\
  0 \le \lambda_{i,J_i}(x)\perp \big( A_{i,J_i} x_i - b_{i,J_i}( x_{-i} )  \big) \geq 0 , \\
  \text{for all} \quad   i = 1, \ldots, N
\earay\right. \right\}.
\ee
Clearly, each $x \in \mc{K}_J$ is a KKT point for the GNEP \reff{eq:GNEP},
so $\mc{K}_J\subseteq \mc{K}$. Indeed, every KKT point
belongs to $\mc{K}_J$ for some $J$.
This is shown in the following theorem.

\begin{theorem}\label{thm:k=ukj}
For the GNEP given by \reff{GNEP:Axi>=bi},
the KKT set $\mc{K}$ can be expressed as
\be  \label{eq:Kdcp}
 \mc{K} \, = \, \bigcup_{J \in \mc{P}}  \mc{K}_J .
\ee
\end{theorem}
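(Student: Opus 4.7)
The plan is to prove the two inclusions separately, with essentially all of the substantive work already packaged inside Theorem~\ref{thm:Kidecomp}. The forward inclusion $\bigcup_{J \in \mc{P}} \mc{K}_J \subseteq \mc{K}$ is a direct unpacking of definitions. Given $x \in \mc{K}_J$ for some $J = (J_1,\ldots,J_N) \in \mc{P}$, I would construct, for each $i \in [N]$, the full Lagrange multiplier $\lambda_i \in \re^{m_i}$ by setting $\lambda_{i,j} = \big(\lambda_{i,J_i}(x)\big)_j$ for $j \in J_i$ and $\lambda_{i,j} = 0$ for $j \notin J_i$. Then $A_i^T \lambda_i = A_{i,J_i}^T \lambda_{i,J_i}(x) = \nabla_{x_i} f_i(x)$ by the defining condition of $\mc{K}_J$, the nonnegativity $\lambda_i \geq 0$ follows from $\lambda_{i,J_i}(x) \geq 0$, and the complementarity $\lambda_i \perp (A_i x_i - b_i(x_{-i}))$ holds because the nonzero entries of $\lambda_i$ are indexed by $J_i$ and already paired against $A_{i,J_i} x_i - b_{i,J_i}(x_{-i})$ in \reff{eq:KJ}. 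Hence $(x, \lambda_1,\ldots,\lambda_N)$ certifies $x \in \mc{K}$.

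For the reverse inclusion $\mc{K} \subseteq \bigcup_{J \in \mc{P}} \mc{K}_J$, I would argue coordinate-wise across the players. Fix $x \in \mc{K}$. By the definition \reff{eq:mcK} of $\mc{K}$, for every $i \in [N]$ the subvector $x_i$ is a KKT point of $\mbox{F}_i(x_{-i})$. Applying Theorem~\ref{thm:Kidecomp} to each player separately produces some $J_i \in \mc{P}_i$ such that the pLME-based condition \reff{eq:kidcp} is satisfied for that player. Collecting these choices yields $J := (J_1,\ldots,J_N) \in \mc{P}_1 \times \cdots \times \mc{P}_N = \mc{P}$, and the conjunction of the conditions \reff{eq:kidcp} for $i = 1,\ldots,N$ is exactly the defining system of $\mc{K}_J$ in \reff{eq:KJ}. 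Thus $x \in \mc{K}_J$, completing the inclusion.

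The only conceptual subtlety worth flagging is that the label sets $J_i$ may be chosen independently across the $N$ players; this is why the union is taken over the Cartesian product $\mc{P}$ rather than, say, a diagonal subset. There is no genuine obstacle here because the KKT system \reff{eq:KKT:sp} decouples across players once $x$ is fixed. Theorem~\ref{thm:Kidecomp}, whose nontrivial direction rests on Carath\'eodory's theorem applied to the conic representation $\nabla_{x_i} f_i(x) = A_i^T \lambda_i$ with $\lambda_i \geq 0$, does the real work; the present theorem is then essentially a bookkeeping statement gluing the per-player decompositions into a global one.
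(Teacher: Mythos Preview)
Your proposal is correct and takes essentially the same approach as the paper: both rely on Theorem~\ref{thm:Kidecomp} applied player by player, then combine the per-player index choices into a tuple $J \in \mc{P}$. The paper's proof is simply a terser packaging of the same argument, writing $\mc{K} = \bigcap_{i=1}^N \{x \in X : x_i \in \widehat{\mc{K}}_i(x_{-i})\}$ and distributing the intersection over the per-player unions, whereas you spell out the two inclusions explicitly.
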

\begin{proof}
By Theorem~\ref{thm:Kidecomp},
the KKT set for the optimization $\mbox{F}_i(x_{-i})$ is
\[
\widehat{\mc{K}}_i(x_{-i}) \, \coloneqq   \bigcup_{  J_i\in \mc{P}_i }
\left\{ x_i  \left|
\baray{c}
   x_i \in X_i(x_{-i}), \quad
  \nabla_{\xpi} f_i(x) -  A_{i,J_i}^T \lambda_{i,J_i}(x)=0,\\
 0 \le \lambda_{i,J_i}(x)\perp \big( A_{i,J_i} x_i - b_{i,J_i}( x_{-i} )  \big)  \geq 0
\earay
\right. \right\} .
\]
In view of \reff{eq:KJ}, we have
\[
\mc{K} \,=\, \bigcap_{i\in [N]} \big \{x \in X: x_i \in \widehat{\mc{K}}_i(x_{-i}) \big \}
\,=\, \bigcup_{J\in \mc{P}}\mc{K}_J.
\]
So, equation \reff{eq:Kdcp} holds.
\end{proof}

When each $\rank \, A_i = n_i$, pLMEs can be given as in \reff{pLME:ri=ni}
and Theorem~\ref{thm:k=ukj} implies the following
simplified expression.

\begin{corollary}  \label{prop:KJ}
If $\rank\, A_i = n_i$ for each $i$, then
\begin{equation*}
 \mc{K}_J  =  \left\{ x \in X\,
 \left|
  \baray{c}
 0 \le A_{i,J_i}^{-T}\nabla_{x_i}f_i(x)    \perp
  \big( A_{i,J_i} x_i - b_{i,J_i}( x_{-i} )  \big)  \geq 0,   \\
  \text{for all} \quad i = 1, \ldots, N
\earay \right.
\right\}
\end{equation*}
for every $J = (J_1,\ldots, J_N)\in \mc{P}$ and
\be  \label{K:ri=ni}
 \mc{K} \,=\, \bigcup_{J \in \mc{P}}
     \left\{ x \in X\,
 \left|   \baray{c}
 0 \le A_{i,J_i}^{-T}\nabla_{x_i}f_i(x)    \perp
  \big( A_{i,J_i} x_i - b_{i,J_i}( x_{-i} )  \big)  \geq 0, \  \\
  \text{for all} \quad  i =1, \ldots, N
\earay \right.
\right\}.
\ee
\end{corollary}

\begin{example}
For the GNEP in Example~\ref{ex:pLME:iri},
it is clear that $r_i = n_i = 2$ for $i = 1,2$.
For $J = (J_1,J_2)$ with $J_1 = \{1,4\}$ and $J_2 = \{1,2\}$,
the $\mc{K}_J$ is given by
\[
\begin{array}{ll}
2-\mathbf{1}^Tx_1 \ge 0,\quad x_1\ge 0,&   2x_{1,1}-x_{2,1}+x_{2,2}-2 \ge 0,\\
6x_{1,1}+x_{1,2}-\mathbf{1}^Tx_2-1 \ge 0, &  x_2\ge 0,\quad 2-x_{1,1}+x_{1,2}-x_{2,1}\ge 0,\\
x_{1,1}-6x_{1,2} \ge0,\, x_{1,1}-x_{1,2}\ge 0, &  x_{2,1}+x_{2,2}\ge 0,\\
(x_{1,1}-6x_{1,2})(2-\mathbf{1}^Tx_1) = 0, & x_{2,2}(2x_{1,1}-x_{2,1}+x_{2,2}-2) = 0,\\
(x_{1,1}-x_{1,2})(6x_{1,1}+x_{1,2}-\mathbf{1}^Tx_2-1) = 0, &
(x_{2,1}+x_{2,2})x_{2,1}= 0.
\end{array}
\]
Indeed, one can further verify that $\mc{K}_J= \big \{  (18, 3, 0, 62 )/49 \big \}$, a singleton.
Furthermore, this point is the unique GNE as well.
By Algorithm \ref{alg:pLME_all}, we know that it is contained in
$\mc{K}_J$ not only for $J = (\{1,4\},\{1,2\})$ but also for
$J = (\{2,4\},\{1,2\})$ or $J = (\{3,4\},\{1,2\})$.
For the GNE, the active label set is
$\hat{J} = (\hat{J}_1,\hat{J}_2)$,
with $\hat{J}_1 = \{4\}, \hat{J}_2 = \{1,2\}$,
and $\hat{J}_1 \subsetneqq  J_1$.
That is, the label set $J$ for finding this KKT point
is not the active constraining set $\hat{J}$.
We remark that the expressions in \reff{eq:Kdcp} and \reff{K:ri=ni}
are not just enumerations of active constraining sets.
\end{example}

\section{Solving GNEPs with quasi-linear constraints}
\label{sec:findGNEs}

We discuss how to solve the GNEP with quasi-linear constraints as in \reff{GNEP:Axi>=bi}.
Since every GNE $x$ is a KKT point, there exists
$J\in\mc{P}$ such that $x\in\mc{K}_J$.
Since $\mc{P}$ is a finite set, there are only finitely many choices of $J$.
Moreover, under some genericity assumptions, the KKT set $\mc{K}$ is finite.
For these cases, the subset $\mc{K}_J$ is also finite for every $J\in\mc{P}$.
This inspires how to find all GNEs.

\subsection{Finding all GNEs in {$\mc{K}_J$}{KJ}}

We introduce how to find GNEs in $\mc{K}_J$ for a fixed $J\in\mc{P}$.
For the given $J$, pLMEs are given by (\ref{eq:pLME}), so the set $\mc{K}_J$ can be represented by equalities and inequalities of polynomials in the variable $x$, as shown in (\ref{eq:KJ}).
Let $\Theta \in\re^{(n+1)\times (n+1)}$ be a symmetric positive definite matrix.
Consider the following polynomial optimization problem
\be \label{eq:findfirstKKT}
\, \left\{
\begin{array}{cl}
\min &  \theta(x)  \coloneqq  [x]_1^T\Theta[x]_1\\
\st & x\in\mc{K}_J.
\end{array}
\right.
\ee
If $\mathcal{K}_J \neq \emptyset$, then \reff{eq:findfirstKKT}
has a unique minimizer $u$ when $\Theta$ is generic
(see \cite[Theorem~5.4]{nie2023convex}), and $u$ is a KKT point.
Otherwise, (\ref{eq:findfirstKKT}) is infeasible, and there is no GNE in $\mc{K}_J$.
We will show how to solve (\ref{eq:findfirstKKT}) in Section~\ref{sc:pops}.
The following result is obvious.

\begin{theorem}\label{eq:solveone}
For the GNEP as in \reff{GNEP:Axi>=bi},
if the optimization problem (\ref{eq:findfirstKKT}) is infeasible, then there is no KKT point in $\mc{K}_J$.
Otherwise, each minimizer $u$ of (\ref{eq:findfirstKKT}) is a KKT point.
Moreover, if the GNEP is convex, $u$ is a GNE.
\end{theorem}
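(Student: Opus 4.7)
The plan is to split the claim into its three assertions and dispatch each one by unfolding definitions and invoking the structural result already proved in Theorem \ref{thm:Kidecomp}. The feasible region of the polynomial program (\ref{eq:findfirstKKT}) is by design the set $\mc{K}_J$ described in (\ref{eq:KJ}), so the first assertion is almost tautological: if (\ref{eq:findfirstKKT}) has no feasible point then $\mc{K}_J=\emptyset$, and since every element of $\mc{K}_J$ is a KKT point by construction, no KKT point of the GNEP lies in $\mc{K}_J$.

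For the second assertion, any minimizer $u$ of (\ref{eq:findfirstKKT}) must lie in $\mc{K}_J$. The defining conditions of $\mc{K}_J$ in (\ref{eq:KJ}) are exactly the system (\ref{eq:kidcp}) written for each player $i$ and the given $J_i \in \mc{P}_i$. Hence Theorem \ref{thm:Kidecomp} applies to each $i \in [N]$ and yields that $u_i$ is a KKT point of $\mbox{F}_i(u_{-i})$; equivalently, one can extend the vector $\lambda_{i,J_i}(u)$ by zeros in the complementary indices to obtain a full multiplier vector $\lambda_i\in\re^{m_i}$ solving (\ref{eq:KKT:sp}). Therefore $u\in\mc{K}$, i.e., $u$ is a KKT point of the GNEP.

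For the third assertion, suppose the GNEP is convex, so each $\mbox{F}_i(u_{-i})$ is a convex optimization problem in $x_i$. The constraints $A_i x_i - b_i(u_{-i})\ge 0$ are affine in $x_i$, so no constraint qualification is needed and the KKT conditions for such a convex program are sufficient for global optimality. Combining with the second assertion, we conclude $u_i\in\mc{S}_i(u_{-i})$ for every $i$, so $u$ is a GNE of the GNEP.

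There is no serious obstacle in this proof: the theorem is essentially a direct corollary of Theorem \ref{thm:Kidecomp} paired with the classical sufficiency of KKT conditions for convex programs with affine constraints. The only point worth stating explicitly is why no constraint qualification is invoked in the convex case, which is handled by the linearity of $A_i x_i - b_i(x_{-i})$ in $x_i$.
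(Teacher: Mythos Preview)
Your proof is correct and follows exactly the line the paper intends: the paper does not even write out a proof, stating simply that ``the following conclusion is obvious,'' and your argument spells out that obvious reasoning via the definition of $\mc{K}_J$, Theorem~\ref{thm:Kidecomp}, and the sufficiency of KKT conditions for convex programs with affine constraints.
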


For convex GNEPs, once we find a minimizer $u$ for (\ref{eq:findfirstKKT}),
then $u$ must be a GNE.
However, when the GNEP is nonconvex, $u$ may or may not be a GNE.
For nonconvex GNEPs, we can check if $u$ is a GNE or not by solving polynomial optimization problems.
By definition, $u$ is a GNE if and only if $\epsilon_i\ge 0$ for every $i\in[N]$,
where $\eps_i$ is the optimal value
\begin{equation}
\label{eq:verifyGNE}
\left\{ \begin{array}{ccl}
\eps_i \coloneqq &\min  &  f_i(x_i, u_{-i}) - f_i(u_{i}, u_{-i}) \\
&\st & x_i \in X_i(u_{-i}).
\end{array} \right.
\end{equation}
Therefore, once we get a KKT point $u$, we solve (\ref{eq:verifyGNE})
for every $i\in[N]$.
If $\epsilon_i\ge0$ for all $i\in[N]$, then we certify that $u$ is a GNE;
otherwise, it is not.

If $u$ is not a GNE, one needs to find other KKT points to solve this GNEP.
Also, when $\mc{K}_J\ne \emptyset$ but $\mc{K}_J\cap \mc{S} = \emptyset$,
we may need to find all points in $\mc{K}_J$ to certify nonexistence of GNEs in $\mc{K}_J$.
Moreover, people are usually interested in finding all GNEs.
In the following, we discuss how to find all GNEs or detect their nonexistence
in $\mc{K}_J$.

Suppose $u = (u_1, \ldots, u_N) \in \mc{K}_J$
is the minimizer of (\ref{eq:findfirstKKT}).
Then we have
\[
\theta( u ) \,  \le  \,  \theta( x ) \quad
\text{for all} \,\,  x\in\mc{K}_J.
\]
When the matrix $\Theta$ is generic, the above inequality holds strictly for all $x\in\mc{K}_J\setminus \{u\}$.
Suppose that $u$ is an isolated point of $\mc{K}_J$.
This is the case when the GNEP is generic, as shown in \cite[Theorem~3.1]{nie2022algebraic}.
Then, there exists $\delta>0$ such that
\be\label{eq:correct_delta}
\theta( u ) + \dt \le \theta( x ) ,
\quad \text{for all} \,\, u \ne   x  \in  \mc{K}_J .
\ee
For such a $\delta>0$, consider the optimization problem
\be  \label{eq:branch_delta}
\, \left\{
\begin{array}{cl}
\min &   \theta( x )  \\
\st & x\in\mc{K}_J,\quad  \theta( x )  \ge  \theta( u )  + \dt.
\end{array}
\right.
\ee
If \reff{eq:branch_delta} is infeasible,
then the set $\mc{K}_J$ does not have any KKT points other than $u$.
Otherwise, it must have a minimizer $\hat{u}$ (since $\Theta$ is positive definite),
which is a KKT point different from $u$.
For the new KKT point $\hat{u}$, we may solve polynomial optimization problems like (\ref{eq:verifyGNE}) to check if it is a GNE or not.

Indeed, more GNEs can be computed by repeating these steps.
Suppose we have obtained the KKT points
$u^{(1)}, u^{(2)}, \ldots,  u^{(j)} \in \mc{K}_J$ for some $j \ge 1$, in the order that
\be \label{uj:order}
\theta( u^{(1)} )  < \theta( u^{(2)} )  < \cdots <  \theta( u^{(j)} ) .
\ee
Suppose $u^{(j+1)}$ is a new KKT point such that
\[
\theta( u^{(j+1)} )  \, = \, \min_{x \in \mc{C}_j}  \theta( x),
\quad \text{where} \quad \mc{C}_j \coloneq \mc{K}_J \setminus
\{ u^{(1)}, u^{(2)}, \ldots,  u^{(j)} \}.
\]
If there exists a scalar $\dt$ satisfying
\be  \label{range:dt}
0 <  \delta < \theta( u^{(j+1)} ) -  \theta( u^{(j)} ),
\ee
then $u^{(j+1)}$ can be obtained by computing the minimizer of
\be  \label{theta>=uj}
\, \left\{
\begin{array}{cl}
\min &   \theta( x )  \\
\st  &   \theta( x )  \ge  \theta( u^{(j)} )  + \dt,  \\
     &    x\in\mc{K}_J .
\end{array}
\right.
\ee
The inequality \reff{range:dt} can be checked as follows.
We can first assign a priori value for $\delta$ (say, $0.5$),
then solve the maximization problem
\be  \label{eq:branch_max}
\left\{\begin{array}{rcl}
\theta_{\max} \coloneqq  & \max &   \theta(x)  \\
& \st & x\in\mc{K}_J,\quad \theta(x) \le \theta(u^{(j)}) + \dt.
\end{array}
\right.
\ee
Since $u^{(j)}$ is a feasible point,
it always holds $\theta_{\max} \ge \theta(u^{(j)})$.
There are two possibilities:

\bit

\item
If $\theta_{\max} = \theta(u^{(j)})$, then $u^{(j)}$
is a maximizer of (\ref{eq:branch_max}).
This implies that $u^{(j+1)}$ is infeasible for \reff{eq:branch_max},
so \reff{range:dt} is satisfied.

\item
If $\theta_{\max} > \theta(u^{(j)})$, then there exists $v \in \mc{K}_J$ such that
\[
\theta(u^{(j)}) <   \theta(v) \le   \theta(u^{(j)}) + \dt .
\]
This means that $\delta$ is too large and violates \reff{range:dt}.
We need to decrease the value of $\delta$ (e.g., by replacing $\delta$ with $\delta/2$)
and solve (\ref{eq:branch_max}) again.

\eit

In light of the above, we get the following algorithm for finding all GNEs in $\mc{K}_J$.

\begin{alg} \label{alg:pLME_J}
For the GNEP as in \reff{GNEP:Axi>=bi} and for a given $J\in\mc{P}$,
select a generic symmetric positive definite matrix $\Theta$
and a small positive value (say, $0.5$) for $\delta$.
Let $\mc{S}_J \coloneqq \emptyset$ and $j \coloneqq 1$.
Then, do the following:

\begin{itemize}

\item [Step~1]
Solve the optimization problem (\ref{eq:findfirstKKT}).
If it is infeasible, output the nonexistence of GNEs in $\mc{K}_J$ and stop.
Otherwise, solve (\ref{eq:findfirstKKT})
for a minimizer $u^{(1)}$ and go to Step~2.

\item [Step~2]
For each $i\in[N]$, compute the minimum value $\eps_i$ of
(\ref{eq:verifyGNE}) for $u\coloneqq u^{(j)}$.
If $\epsilon_i\ge0$ for every $i$, then update
$\mc{S}_J\coloneqq \mc{S}_J\cup \{ u^{(j)} \}$.

\item [Step~3]
Compute the maximum value $\theta_{\max}$ of \reff{eq:branch_max}.

\item [Step~4]
If $\theta_{\max} = \theta(u)$, then go to Step~5;
otherwise, let $\delta \coloneqq \delta/2$ and go to Step~3.

\item [Step~5]
Solve the optimization problem \reff{theta>=uj}.
If it is infeasible, output that $\mc{S}_J$ is the set of all GNEs in $\mc{K}_J$ and stop.
Otherwise, update $j\coloneqq j+1$ and solve \reff{theta>=uj} for a minimizer $u^{(j)} $,
then go to Step~2.

\end{itemize}
\end{alg}

If the GNEP is convex, every KKT point is a GNE,
so Step~2 can be skipped.
The properties of Algorithm~\ref{alg:pLME_J}
are summarized as follows.

\begin{prop}  \label{prop:iso_u}
For the GNEP as in \reff{GNEP:Axi>=bi},
the following properties hold for Algorithm~\ref{alg:pLME_J}:

\bit

\item [(i)]
If $\theta_{\max} = \theta(u^{(j)})$ and \reff{theta>=uj} is infeasible, then
$S_J$ is the set of all GNEs in $\mc{K}_J$.

\item [(ii)]
If $\theta_{\max} = \theta(u^{(j)})$ and $u^{(j+1)}$
is the minimizer of \reff{theta>=uj},
then $\dt$ satisfies \reff{range:dt}.

\item [(iii)]
Assume $u^{(1)}, \ldots, u^{(j)}$ are isolated points of $\mc{K}_J$.
Suppose $\Theta$ is a generic symmetric positive definite matrix,
then there exists $\delta>0$ such that $\theta_{\max} = \theta(u^{(j)})$,
i.e., $u^{(j)}$ is the maximizer of (\ref{eq:branch_max}).

\eit
\end{prop}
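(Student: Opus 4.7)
The plan is to handle the three items in the order (ii), (i), (iii), since (ii) is purely logical, (i) falls out of chaining (ii) through the iterative structure of Algorithm~\ref{alg:pLME_J}, and (iii) is the only piece that genuinely requires topology together with a genericity argument.

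For (ii), I would reason directly from the defining constraints. The equality $\theta_{\max}=\theta(u^{(j)})$ in \reff{eq:branch_max} says that the half-open interval $(\theta(u^{(j)}),\theta(u^{(j)})+\delta]$ is disjoint from $\theta(\mc{K}_J)$. By feasibility of \reff{theta>=uj} the minimizer $u^{(j+1)}$ satisfies $\theta(u^{(j+1)})\ge\theta(u^{(j)})+\delta$; equality would place $u^{(j+1)}$ inside the forbidden half-open interval, so the inequality is strict, which is exactly \reff{range:dt}.

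For (i), I would iterate this observation through the history of the algorithm. At each earlier iteration producing $u^{(k+1)}$ we had $\theta_{\max}=\theta(u^{(k)})$ for the $\delta$ then in use, so $(\theta(u^{(k)}),\theta(u^{(k)})+\delta]\cap\theta(\mc{K}_J)=\emptyset$; since $u^{(k+1)}$ minimizes $\theta$ on $\mc{K}_J$ subject to $\theta\ge\theta(u^{(k)})+\delta$, the open interval $(\theta(u^{(k)}),\theta(u^{(k+1)}))$ is also missed by $\theta(\mc{K}_J)$. For generic $\Theta$, \cite[Theorem~5.4]{nie2023convex} makes each minimizer involved unique, so the value $\theta(u^{(k)})$ is attained in $\mc{K}_J$ only by $u^{(k)}$ itself. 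Chaining these gaps for $k=1,\ldots,j-1$ and using the current hypotheses ($\theta_{\max}=\theta(u^{(j)})$ and infeasibility of \reff{theta>=uj}) to close the right tail, one gets $\mc{K}_J=\{u^{(1)},\ldots,u^{(j)}\}$. Step~2 of the algorithm filters each $u^{(k)}$ through \reff{eq:verifyGNE}, so $\mc{S}_J$ is precisely the set of GNEs in $\mc{K}_J$.

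For (iii), the substance is to rule out a right-accumulation of $\theta(\mc{K}_J)$ at $\theta(u^{(j)})$. Because $\Theta\succ 0$ the quadratic $\theta$ is coercive on $\re^n$ and $\mc{K}_J$ is closed. Isolation of $u^{(j)}$ gives $r>0$ with $B(u^{(j)},r)\cap\mc{K}_J=\{u^{(j)}\}$, so on $S\coloneqq\mc{K}_J\setminus B(u^{(j)},r)$ the sublevel cap $S\cap\{\theta\le\theta(u^{(j)})+1\}$ is closed and bounded, hence compact. Let $m^+\coloneqq\inf\{\theta(x):x\in S,\ \theta(x)>\theta(u^{(j)})\}$. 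If $m^+>\theta(u^{(j)})$, then any $\delta\in(0,m^+-\theta(u^{(j)}))$ achieves $\theta_{\max}=\theta(u^{(j)})$. The only obstruction is $m^+=\theta(u^{(j)})$, in which case a minimizing sequence extracts via compactness and continuity of $\theta$ a limit $v^*\in S$ with $v^*\neq u^{(j)}$ and $\theta(v^*)=\theta(u^{(j)})$; using $u^T\Theta u-v^T\Theta v=(u-v)^T\Theta(u+v)$, the equation $\theta_\Theta(u^{(j)})=\theta_\Theta(v^*)$ is linear in the entries of $\Theta$ and cuts out a proper subvariety of the SPD cone that a generic $\Theta$ avoids.

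The subtlest step will be the genericity argument in (iii). Because the bad pair $(u^{(j)},v^*)$ itself depends on $\Theta$, and $\mc{K}_J$ can contain uncountably many candidates $v^*$, one cannot naively union finitely many hyperplanes to describe the bad locus. The clean way around this is through Tarski--Seidenberg: the image $\theta(\mc{K}_J)\subset\re$ is semi-algebraic, hence a finite union of intervals, and isolation of $u^{(j)}$ in $\mc{K}_J$ together with a generic choice of $\Theta$ should force $\theta(u^{(j)})$ to be either an isolated point or a right endpoint of one of those intervals, which directly yields the required gap $\delta$.
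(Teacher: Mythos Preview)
Your treatment of (i) and (ii) is sound and close to the paper's, with the added care of invoking uniqueness of minimizers to exclude distinct points sharing the same $\theta$-value; the paper's proof of (i) leaves that step implicit.

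The gap is in (iii). You correctly diagnose the difficulty --- the candidate $v^*$ (and $u^{(j)}$ itself) moves with $\Theta$, so the single linear constraint $\theta_\Theta(u^{(j)}) = \theta_\Theta(v^*)$ does not by itself cut out a measure-zero set of matrices. But your proposed remedy via Tarski--Seidenberg is only a sketch: knowing that $\theta(\mc{K}_J) \subset \re$ is a finite union of intervals does not, on its own, force $\theta(u^{(j)})$ to sit at a right endpoint or be an isolated value. One can have an isolated point $p \in \mc{K}_J$ and a positive-dimensional component $C \subset \mc{K}_J$ with $\theta(p) = \min_{C} \theta$; this equality is exactly the obstruction to a right gap, and you have not shown that it defines a null set of $\Theta$'s. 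The word ``should'' in your last sentence marks the missing argument.

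The paper closes this gap by a different device: it lifts to moment coordinates via $x \mapsto [x]_2$, so that $\theta(x) = \langle \theta, [x]_2 \rangle$ becomes \emph{linear} in the lifted variable. After localizing to a compact piece
\[
T_1 \,=\, \big\{\,[x]_2 : x \in \mc{K}_J \cap B,\ x \ne u^{(k)}\ (k<j)\,\big\}
\]
of the lifted set (using isolation of the $u^{(k)}$ and a ball $B$ coming from an eigenvalue stratification $\Theta \in \mathbb{S}_{1/l}$), the question reduces to whether the linear functional $\theta$ has a unique minimizer over the compact convex body $\mathrm{conv}(T_1)$. Non-uniqueness means $\theta$ is a \emph{singular normal vector} of that body, and by \cite[Theorem~2.2.11]{schneider2014convex} such directions form a Lebesgue-null set. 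Uniqueness then pins down $[u^{(j)}]_2$ as the sole minimizer; removing it leaves a still-compact set $T_2$ on which the minimum of $\langle \theta, \cdot\rangle$ is strictly larger, delivering the required $\delta$. This linearization-plus-convex-geometry step is the key idea your proposal is missing.
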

\begin{proof}
(i) When $\theta_{\max} =\theta(u^{(j)})$, the KKT point $u^{(j)}$ is the maximizer of \reff{eq:branch_max}.
If there is $v \in \mc{K}_J$
other than  $u^{(1)}, \ldots, u^{(j)}$, then
\[
\theta(v) > \theta(u^{(j)}) + \dt.
\]
On the other hand, when \reff{theta>=uj} is infeasible,
every $x \in \mc{K}_J$ must satisfy
\[
\theta(x)  < \theta(u^{(j)}) + \dt.
\]
Therefore, if $\theta_{\max} = \theta(u^{(j)})$ and
\reff{theta>=uj} is infeasible, then there are no KKT points in $\mc{K}_J$ except
$u^{(1)}, \ldots, u^{(j)}$.
This implies that all GNEs in $\mc{K}_J$ are contained in $\mc{S}_J$.

\medskip
(ii) If $\theta_{\max} = \theta(u^{(j)})$ and
$u^{(j+1)}$ exists, then as in (i), we can get
\[
\theta(u^{(j+1)}) > \theta(u^{(j)}) + \dt ,
\]
which means that \reff{range:dt} holds.

\medskip
(iii) For $\epsilon>0$, let $\mathbb{S}_{\epsilon}$ denote the set of
all $(n+1)$-by-$(n+1)$ symmetric positive definite matrices
whose largest eigenvalue equals one and
whose smallest eigenvalue is at least $\epsilon$.
The set of all $(n+1)$-by-$(n+1)$ symmetric positive definite matrices
of unit $2$-norm
is the union $\bigcup_{l=1}^{\infty} \mathbb{S}_{1/l}.$
For each $l\in \N$, we show that the conclusion holds for all
$\Theta \in \mathbb{S}_{1/l}$
except a set of Lebesgue measure zero.

Let $\Theta\in \mathbb{S}_{1/l}$ be an arbitrary matrix.
By the selection of $u^{(1)}, \ldots, u^{(j)}$, it holds
\[
\nu_1  \coloneqq  \theta( u^{(1)} )  <  \nu_2  \coloneqq  \theta( u^{(2)} )
<  \cdots <  \nu_j  \coloneqq   \theta( u^{(j)} )  .
\]
We consider the case $j>1$ for convenience
because the proof is almost the same for $j=1$.
When $\mc{K}_J$ has no other points except $u^{(1)}, \ldots, u^{(j)}$,
we have $\theta_{\max} = \theta(u^{(j)})$ for all $\dt > 0$.
So, we consider the opposite case and
suppose that $\bar{u}$ is a point in $\mc{K}_J$
that is different from $u^{(1)}, \ldots, u^{(j)}$,
and that $\nu_j\le\theta(\bar{u})$.
If $x$ is a minimizer of \reff{theta>=uj} in previous loops, then
\[
e_1^T\Theta e_1 + \| \bar{u} \|^2 \ge [\bar{u}]_1^T\Theta [\bar{u}]_1 \ge  [x]_1^T\Theta [x]_1  \ge e_1^T\Theta e_1 +\Vert x\Vert^2/l,
\]
with $e_1 = (1,0,\ldots, 0)^T$.
Thus, all minimizers of \reff{theta>=uj} in previous loops are contained in the ball
\[
B \coloneqq  \left\{ x\in\re^n: \Vert x\Vert^2 \le l \cdot  \| \bar{u} \|^2  \right\},
\]
which implies $u^{(k)}\in B$ for each $k$.
Recall the notation $[x]_d$ as in \reff{[x]d}.
Since each $u^{(k)}$ is an isolated point of $\mc{K}_J$, the set
\[
T_1  \coloneqq  \big \{ [x]_2: \, x\in\mc{K}_J \cap B,
    x \ne  u^{(k)}, 1 \le k \le j-1 \big \}
\]
is compact.
Since $\theta( x ) = \langle \theta, [x]_2 \rangle$,
we have
\[
\langle  \theta,  [u^{(1)}]_2 \rangle
<  \cdots <  \langle  \theta,  [u^{(j-1)}]_2 \rangle < \min_{ y \in T_1} \langle  \theta,  y \rangle .
\]
The right most minimization in the above is equivalent to
\be \label{min:cv(T1)}
\, \left\{
\begin{array}{cl}
\min &  \langle  \theta,  y \rangle\\
\st & y\in\mbox{conv}(T_1).
\end{array}
\right.
\ee
The convex hull $\mbox{conv}(T_1)$ is a compact convex set.
Observe that if \reff{min:cv(T1)} has more than one minimizer,
then $\theta$ is a singular normal vector of the convex body $\mbox{conv}(T_1)$.
The set of singular normal vectors of a convex body has Lebesgue measure zero.
This is shown in \cite[Theorem~2.2.11]{schneider2014convex}.
So, when $\Theta$ is generic in $\mathbb{S}_{1/l}$,
the linear optimization \reff{min:cv(T1)} has the unique minimizer $[u^{(j)}]_2$.
Let
$$T_2 \coloneqq T_1\setminus \{[u^{(j)}]_2\}.$$
Since $u^{(j)}$ is an isolated point of $\mc{K}_J$,
the set $T_2$ is also compact, so
\[
\langle  \theta,  [u^{(j)}]_2 \rangle
    <  \min_{ y \in T_2} \langle  \theta,  y \rangle .
\]
Then there must exist $\dt > 0$ such that
\[
\langle  \theta,  [u^{(j)}]_2 \rangle  +  \dt
<  \min_{ y \in T_2} \langle  \theta,  y \rangle .
\]
For the above $\dt$, we must have $\theta_{\max} = \theta(u^{(j)})$.
This means that the conclusion holds for all
$\Theta \in \mathbb{S}_{1/l}$
except for a set of Lebesgue measure zero,
for each $l\in \N$.
This completes the proof.
\end{proof}

When the cardinality $|\mc{K}_J|<\infty$,
all points in $\mc{K}_J$ are isolated,
so the following follows from Proposition~\ref{prop:iso_u}.
\begin{theorem} \label{tm:allinJ}
Consider the GNEP as in \reff{GNEP:Axi>=bi}.
For the given $J$, if $|\mc{K}_J|<\infty$,
then Algorithm~\ref{alg:pLME_J} returns all GNEs contained in $\mc{K}_J$
or detects their nonexistence.
\end{theorem}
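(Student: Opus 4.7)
The plan is to derive the statement as a direct consequence of Proposition~\ref{prop:iso_u} once finiteness of $\mc{K}_J$ is used to guarantee termination. The key observation is that when $|\mc{K}_J|<\infty$, every element of $\mc{K}_J$ is automatically isolated, so all three parts of Proposition~\ref{prop:iso_u} are available at each iteration of Algorithm~\ref{alg:pLME_J}.

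First, I would verify that each outer iteration completes. At the start of iteration $j$, we have isolated KKT points $u^{(1)},\ldots,u^{(j)}\in\mc{K}_J$. By Proposition~\ref{prop:iso_u}(iii), since $\Theta$ is a generic symmetric positive definite matrix, there exists some threshold $\delta^{*}>0$ such that $\theta_{\max}=\theta(u^{(j)})$ in \reff{eq:branch_max} for every $0<\delta\le\delta^{*}$. Since Step~4 halves $\delta$ whenever $\theta_{\max}>\theta(u^{(j)})$, the inner loop Step~3--Step~4 terminates after finitely many halvings with a $\delta$ satisfying the condition of Step~5.

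Second, I would invoke Proposition~\ref{prop:iso_u}(i)--(ii) to analyze Step~5 with the chosen $\delta$. If \reff{theta>=uj} is infeasible, part~(i) yields that $\{u^{(1)},\ldots,u^{(j)}\}$ exhausts $\mc{K}_J$, and the algorithm stops reporting $\mc{S}_J$. If instead \reff{theta>=uj} has a minimizer $u^{(j+1)}$, then by part~(ii) the current $\delta$ lies in the range \reff{range:dt}, so in particular $\theta(u^{(j+1)})>\theta(u^{(j)})$ strictly. Hence the sequence $\theta(u^{(1)})<\theta(u^{(2)})<\cdots$ is strictly increasing, and each $u^{(k)}$ is a distinct element of $\mc{K}_J$.

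Third, termination follows from finiteness: because all $u^{(k)}$ are distinct and lie in $\mc{K}_J$, the algorithm performs at most $|\mc{K}_J|$ outer iterations before \reff{theta>=uj} becomes infeasible, at which point Proposition~\ref{prop:iso_u}(i) certifies that $\{u^{(1)},\ldots,u^{(j)}\}=\mc{K}_J$. Finally, Step~2 of the algorithm checks every $u^{(k)}$ against the verification problems \reff{eq:verifyGNE}, adding $u^{(k)}$ to $\mc{S}_J$ exactly when it is a GNE; thus $\mc{S}_J$ equals the set of GNEs in $\mc{K}_J$, or is empty if no such GNE exists.

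The main obstacle I anticipate is the interaction between the $\delta$-adjustment loop and the genericity hypothesis on $\Theta$: one must ensure that the threshold $\delta^{*}$ produced by Proposition~\ref{prop:iso_u}(iii) is reached in finitely many halvings regardless of the starting value of $\delta$. This is automatic since halving $\delta$ drives it below $\delta^{*}$ in at most $\lceil\log_2(\delta_0/\delta^{*})\rceil$ steps, but it is worth stating explicitly so that the termination argument is complete.
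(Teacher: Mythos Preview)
Your proposal is correct and follows exactly the paper's approach: the paper simply observes that $|\mc{K}_J|<\infty$ makes every point of $\mc{K}_J$ isolated and then invokes Proposition~\ref{prop:iso_u}. Your write-up merely spells out the termination details (finite halving of $\delta$, strict increase of $\theta(u^{(k)})$, at most $|\mc{K}_J|$ outer iterations) that the paper leaves implicit.
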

When the GNEP is given by generic polynomials,
the critical set $\mc{K}$ (hence its subset $\mc{K}_J$ for each $J\in\mc{P}$) is finite; see \cite[Theorem~3.1]{nie2022algebraic}.
So, for generic GNEPs as in \reff{GNEP:Axi>=bi},
Algorithm~\ref{alg:pLME_J} can find all GNEs in $\mc{K}_J$ or detect their nonexistence.

\subsection{Finding all GNEs}
\label{ssc:allGNEs}

For a given $J$, Algorithm~\ref{alg:pLME_J} can
compute all GNEs in $\mc{K}_J$ or detect their nonexistence.
For the GNEP as in \reff{GNEP:Axi>=bi},
the set $\mc{P}$ is finite. By enumerating $J\in\mc{P}$,
we can get all GNEs or detect their nonexistence.
This gives the following algorithm.

\begin{alg} \label{alg:pLME_all}
For the GNEP as in \reff{GNEP:Axi>=bi},
formulate the label set $\mc{P}$.
Let $\mc{S} \,:=\, \emptyset$.
For each $J\in\mc{P}$, do the following:

\begin{itemize}

\item[Step~1]
Formulate the pLME  $\lmd_{i,J_i}$ as in (\ref{eq:pLME}) for each player $i$.

\item[Step 2]
Apply Algorithm~\ref{alg:pLME_J} to find the set $S_J$ of all GNEs in $\mc{K}_J$.

\item[Step 3]
Update $\mc{S}\,\coloneqq\, \mc{S} \cup S_J.$
\end{itemize}
\end{alg}

The following result follows from Theorem~\ref{tm:allinJ}.

\begin{theorem}\label{tm:all}
For the GNEP as in \reff{GNEP:Axi>=bi},
assume the critical set $\mc{K}$ is finite and $\Theta$ is a generic symmetric positive definite matrix.
Then, after enumerating all $J\in\mc{P}$,
Algorithm~\ref{alg:pLME_all} finds all GNEs if $\mc{S}\ne\emptyset$, or
detects nonexistence of GNEs if $\mc{S}=\emptyset$.
\end{theorem}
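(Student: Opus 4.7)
The plan is to show that Theorem~\ref{tm:all} follows as a fairly direct combination of the decomposition of the KKT set given by Theorem~\ref{thm:k=ukj} and the single-branch result Theorem~\ref{tm:allinJ}. The key observation is that every GNE is a KKT point of the GNEP in \reff{GNEP:Axi>=bi}, so the set of GNEs is contained in $\mc{K}$, and it suffices to demonstrate that the union of the sets $\mc{S}_J$ produced over all $J \in \mc{P}$ captures every GNE and nothing spurious.

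First, I would invoke Theorem~\ref{thm:k=ukj} to write $\mc{K} = \bigcup_{J \in \mc{P}} \mc{K}_J$. Since the overall critical set $\mc{K}$ is assumed finite and $\mc{K}_J \subseteq \mc{K}$ for each $J$, every $\mc{K}_J$ is also finite; in particular, all its points are isolated, which is exactly the hypothesis required to apply Theorem~\ref{tm:allinJ}. The set $\mc{P} = \mc{P}_1 \times \cdots \times \mc{P}_N$ defined in \reff{eq:setP} is a finite Cartesian product, so the outer enumeration in Algorithm~\ref{alg:pLME_all} terminates after finitely many iterations.

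Second, for each $J \in \mc{P}$, I would apply Theorem~\ref{tm:allinJ} with the given generic symmetric positive definite $\Theta$. This guarantees that Step~2 of Algorithm~\ref{alg:pLME_all}, which calls Algorithm~\ref{alg:pLME_J}, either returns the full set $\mc{S}_J$ of GNEs contained in $\mc{K}_J$ or certifies $\mc{S}_J = \emptyset$. The genericity of $\Theta$ is the only hypothesis needed to make the branch-and-cut mechanism of Proposition~\ref{prop:iso_u}(iii) apply simultaneously across all branches, because finitely many null sets in the space of symmetric positive definite matrices still form a null set; a generic $\Theta$ therefore avoids all of them at once.

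Third, I would conclude by taking unions. Since every GNE is a KKT point, the set of all GNEs equals its intersection with $\mc{K}$, hence with $\bigcup_{J \in \mc{P}} \mc{K}_J$. Swapping intersection with union gives that the set of all GNEs equals $\bigcup_{J \in \mc{P}} \mc{S}_J$, which is precisely the final value of $\mc{S}$ after Algorithm~\ref{alg:pLME_all} terminates. If this union is empty, then no branch $\mc{K}_J$ contains a GNE, so no GNE exists; otherwise $\mc{S}$ is the complete GNE set. I do not foresee a serious obstacle here, since the heavy lifting (the branch enumeration correctness, the genericity argument for $\Theta$, and the finiteness of isolated KKT points) has already been established in Theorem~\ref{thm:k=ukj}, Proposition~\ref{prop:iso_u}, and Theorem~\ref{tm:allinJ}; the only mildly delicate point is confirming that a single generic $\Theta$ works simultaneously for all $J \in \mc{P}$, which is handled by the finiteness of $\mc{P}$.
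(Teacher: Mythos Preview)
Your proposal is correct and follows essentially the same approach as the paper, which simply notes that the result follows from Theorem~\ref{tm:allinJ} without further elaboration. Your write-up is in fact more detailed than the paper's, spelling out the use of Theorem~\ref{thm:k=ukj}, the finiteness of each $\mc{K}_J$, and the observation that a single generic $\Theta$ works for all finitely many $J\in\mc{P}$.
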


\begin{remark}
(i) Theorem~\ref{tm:all} implies the finite convergence of Algorithm~\ref{alg:pLME_all}.
That is, when the set $\mc{K}$ is finite and $\Theta$ is generically positive definite,
our method can get all GNEs by solving finitely many polynomial optimization subproblems.
Furthermore, as shown in Section~\ref{sc:pops}, the Moment-SOS hierarchy is guaranteed to find global minimizers for these subproblems, under some genericity assumptions.
Therefore, for generic GNEPs, our method can find all GNEs if there exist finitely many ones,
or detect their nonexistence, by solving finitely many semidefinite programs.

(ii) Computing all GNEs is a challenging task.
To the best of the authors' knowledge, there are few
efficient methods for doing this.
For GNEPs given by polynomial or rational functions, the semidefinite relaxation method in \cite{Nie2020nash,NieTangZgnep21} guarantees to find all GNEs, under the assumption that $|\mc{K}| < \infty$ and {\it feasible extensions} exist;
the polyhedral homotopy method in \cite{Lee2023} finds all GNEs if the number of roots for the complex KKT system equals the mixed volume (with the origin included in the support).
In contrast, Algorithm~\ref{alg:pLME_all} does not require such conditions to find all GNEs.

(iii) The assumption that $\mc{K}$ is a finite set holds when every $f_i$ and $b_i$ are given by generic polynomials and each $A_i$ is a generic matrix.
This is shown in \cite{nie2022algebraic}.
However, it is possible that $|\mc{K}|$ is infinite when the GNEP is not generic; see Example~\ref{ex:shared}.
For these cases, if the GNEP is convex, then our method still guarantees finding at least one GNE, but it may fail to find all of them;
otherwise, our method may or may not find GNEs and check the completeness of the solution set.
Besides that, the assumption that $\Theta$ is a generic symmetric positive definite matrix is independent of the problem.
Indeed, Theorem~\ref{tm:all} implies that if we set $\Theta \coloneqq RR^T$
with a randomly generated $R\in \re^{(n+1)\times (n+1)}$ whose entries obey the normal distribution, then Algorithm~\ref{alg:pLME_all} can almost always find all GNEs as long as $\mc{K}$ is finite.

(iv) To find all GNEs or detect their nonexistence,
Algorithm~\ref{alg:pLME_all} requires enumerating all
$J\in\mc{P}$, where $|\mc{P}| = |\mc{P}_1|\times \cdots\times |\mc{P}_N|$.
For each $i$, the cardinality of $|\mc{P}_i|$ is at most $\binom{m_i}{r_i}$, which
may increase at a polynomial (or even linear) rate for many interesting cases.
For instance, if $m_i = r_i+1$, then $\binom{m_i}{r_i} = m_i$ and the growth rate is linear.
Moreover, even if $|\mc{P}_i|$ increases exponentially as the problem size increases,
the cardinality of $\mc{P}_i$ is usually less than $\binom{m_i}{r_i}$ (e.g., consider the boxed constraint $L_{i,j} \le x_{i,j} \le U_{i,j}$ with scalars $L_{i,j} < U_{i,j}$ for each $i,j$).
Last, if we aim to find a single GNE, then we typically do not need to enumerate all $J\in\mc{P}$, as shown in Section~\ref{sec:NumExp}.
\end{remark}

To end this section, we discuss a practical strategy based on constraint screening to reduce the number of sets $J$ for enumeration in computational practice.
If one has a priori information that some constraints cannot be active simultaneously at GNEs,
then all $J\in\mc{P}$ such that the labels of these constraints are contained in $J$ can be precluded from the enumeration.
For instance, suppose that $n_1=n_2$ and consider the two player GNEP with joint box constraints:
\be\label{eq:joint_box_Xi}  X_i(\xmi) = \left\{ x_i\in \re^{n_i}
\left| \begin{array}{cl}
0 \le x_{1,j} + x_{2,j} \le 1,& \forall j\in[n_i],\\
\hat{A}_i\xpi \ge \hat{b}_i(\xmi) &
\end{array}
\right. \right\}.   \ee
For every $j=1\ddd n_1$,
let $\lmd_{i,2j-1}$ and $\lmd_{i,2j}$ be the Lagrange multipliers associated to $x_{1,j} + x_{2,j} \ge 0$ and $ 1- x_{1,j} - x_{2,j} \ge 0$, respectively.
Since these two constraints cannot be simultaneously active, at least one of
$\lmd_{1,2j-1}$ and $\lmd_{2,2j}$ must be zero.
Similarly, at most one of $\lmd_{2,2j-1}$ and $\lmd_{1,2j}$ can be nonzero.
Thus for this GNEP, if we let
\[ \mc{Q} \coloneq \left\{ J\in\mc{P} \left|
\begin{array}{c}
\mbox{$\exists$ $i\in [2]$ and $j\in [n_1]$ such that}\\
2j-1\in J_i,\ 2j\in J_{3-i}
\end{array}
\right. \right\},  \]
then one may skip all $J\in \mc{Q}$ for finding GNEs.
To illustrate the impact, if we momentarily disregard the constraints $\hat{A}_i\xpi \ge \hat{b}_i(\xmi)$, then the cardinality $|\mc{P}| = 4^{n_1}$, whereas the number of valid sets $J$ to enumerate becomes $|\mc{P}\setminus \mc{Q}| = 2^{n_1}$.

\section{Solving Polynomial Optimization}
\label{sc:pops}

We now show how to solve polynomial optimization problems
that appear in Algorithms~\ref{alg:pLME_J} and \ref{alg:pLME_all}.
They can be generally expressed in the form:
\be    \label{eq:pop}
\left\{\begin{array}{rcl}
f_{min} \coloneqq  & \min\limits_{z} & f(z)\\
& \st & p(z) = 0 \,\, (\forall p \in \Phi),\\
& & q(z) \geq 0 \,\, (\forall q \in \Psi ),
\end{array}
\right.
\ee
where the variable $z$ represents either $x\in\re^n$ or $x_i\in\re^{n_i}$ for the $i$th player, and $\Phi$, $\Psi$ are finite sets of equality and inequality constraining polynomials, respectively.
The Moment-SOS relaxations are efficient
for solving \reff{eq:pop} globally.
We refer to the books \cite{HenrionKordaLasserre2020,lasserre2015,Lau09,MPO} for a more detailed introduction.

Denote the degrees
\begin{equation*}
\baray{rcl}
d_0 & \coloneqq & \max \big\{\left\lceil \deg(p)/2 \right\rceil  : \,
p \in   \Phi \cup \Psi  \big \},  \\
d_1 & \coloneqq & \max \big\{\left\lceil \deg(f)/2 \right\rceil, d_0 \big \}.
\earay
\end{equation*}
Let $\ell$ be the dimension of $z$. For a degree $k \geq d_1$,
the $k$th order moment relaxation for solving \reff{eq:pop} is
\begin{equation}\label{eq:mom}
\left\{\begin{array}{rcl}
f_{mom,k} \coloneqq  & \min & \langle f, y \rangle  \\
& \st & y \in \mathbb{R}^{\mathbb{N}_{2k}^{\ell}},\, y_0 = 1, L_p^{(k)}[y] = 0 \, (p \in \Phi),  \\
& & M_k[y] \succeq 0, \, L_q^{(k)}[y] \succeq 0 \, (q \in \Psi ).
\end{array}
\right.
\end{equation}
The dual optimization of \reff{eq:mom} is the $k$th order SOS relaxation
\begin{equation}\label{eq:sos}
\left\{\begin{array}{rcl}
f_{sos,k} \coloneqq  & \max & \gamma\\
& \st & f - \gamma \in \textrm{Ideal}[\Phi]_{2k}+\textrm{QM}[\Psi ]_{2k}.
\end{array}
\right.
\end{equation}
We refer to Section~\ref{sec:pre} for the notation $\langle f, y \rangle$, $L_p^{(k)}[y]$, $M_k[y]$, $\textrm{Ideal}[\Phi]_{2k}$, $\textrm{QM}[\Psi ]_{2k}$ in the above.
It is worth remarking that \reff{eq:mom}-\reff{eq:sos} is a primal-dual pair of semidefinite programs.
For $k = d_1, d_1+1, \ldots$, the primal-dual pair \reff{eq:mom}-\reff{eq:sos} is called the Moment-SOS hierarchy.
Its convergence property can be summarized as follows.
When $\textrm{Ideal}[\Phi]+\textrm{QM}[\Psi ]$ is archimedean, we have $f_{mom,k} \to f_{min}$ as $k\to\infty$.
Moreover, if the linear independence constraint qualification, strict complementarity condition, and second order sufficient optimality conditions hold at every minimizer, then $f_{sos,k} =  f_{min}$ for all $k$ that is big enough (see \cite{MPO,nie2014optimality}).

In the following, we show how to extract minimizers for
\reff{eq:pop} from the moment relaxation.
Suppose $y^{(k)}$ is a minimizer of \reff{eq:mom}.
If $y^{(k)}$ satisfies the flat truncation:
there exists a degree $t \in [d_1, k]$ such that
\be   \label{eq:rank}
\rank \, M_t[y^{(k)}]  \,= \, \rank \, M_{t-d_0}[y^{(k)}] ,
\ee
then $f_{min} = f_{mom,k}$
and we can extract $r\coloneqq  \rank\,M_t[y^{(k)}]$ minimizers
for (\ref{eq:pop}) (see \cite{HL05,nie2013certifying,MPO}).
Indeed, flat truncation is a sufficient and almost necessary condition
for extracting minimizers. This is shown in \cite{nie2013certifying}.

The Moment-SOS algorithm for solving (\ref{eq:pop}) is as follows.

\begin{alg}\label{alg:mom-sos}
For the polynomial optimization \reff{eq:pop}, initialize $k \coloneqq d_1$.
\begin{itemize}

\item[Step~1] Solve the moment relaxation \reff{eq:mom}.
If it is infeasible, then \reff{eq:pop} is infeasible and stop.
Otherwise, solve it for a minimizer $y^{(k)}$.

\item[Step~2] Check whether or not $y^{(k)}$ satisfies the rank condition \reff{eq:rank}.
If \reff{eq:rank} holds,
then extract $r\coloneqq \rank\,M_t[y^{(k)}]$ minimizers of \reff{eq:pop} and stop.
Otherwise, let $k \coloneqq k+1$ and go to Step 1.

\end{itemize}

\end{alg}

Algorithm \ref{alg:mom-sos} can be implemented
in the software \texttt{GloptiPoly3} \cite{GloPol3},
which calls SDP package such as {\tt MOSEK} \cite{mosek}.
For Algorithms~\ref{alg:pLME_J} and \ref{alg:pLME_all},
the optimization problem~\reff{eq:pop} is one of \reff{eq:findfirstKKT},
\reff{eq:verifyGNE}, \reff{theta>=uj}, or \reff{eq:branch_max}.
We have the following remarks:
\begin{itemize}

\item
For the minimization problem \reff{eq:findfirstKKT} and \reff{theta>=uj},
we have $z \coloneqq x$ and $f(x)\coloneqq \theta(x)$,
where $\theta(x)$ is defined by the generically selected positive definite matrix $\Theta$.
So, if they are feasible, then they have a unique optimizer.
Moreover, the equality constraints of both \reff{eq:findfirstKKT}
and \reff{theta>=uj} define finite real varieties	
when the polynomials for the GNEP have generic coefficients (see \cite{nie2022algebraic}).
In these cases, flat truncation (\ref{eq:rank}) holds with
$r=1$ for all $k$ that is big enough \cite{Nie13FiniteVarieties}.

\item
For the polynomial optimization problem (\ref{eq:verifyGNE}) of verifying GNEs,
we have $z\coloneqq x_i$ and $f(x_i)\coloneqq f_i(x_i,u_{-i}) - f_i(u_i,u_{-i})$.
This problem must be feasible, as $u_i$ is a feasible point.
If $f_{mom,k} \ge 0$, we can terminate Algorithm~\ref{alg:mom-sos}
directly since we do not need to extract minimizers for this case.

\item
For the maximization problem \reff{eq:branch_max},
we have $z \coloneqq x$ and $f(x)\coloneqq -\theta(x)$.
It is always feasible since $u^{(j)}$ is a feasible point.
Furthermore, when the GNEP is given by generic polynomials,
the equality constraints of \reff{eq:branch_max} give a finite real variety,
so flat truncation (\ref{eq:rank}) holds for all $k$ that is big enough.

\end{itemize}

Recall that $e_i$ represents the vector of all zeros except that
$i$th entry is $1$. The notation $y^{(k)}_{e_i}$
denotes the entry of $y^{(k)}$  labeled by $e_i$.
The following is the convergence property of Algorithm~\ref{alg:mom-sos}
when it is applied to solve polynomial optimization problems \reff{eq:findfirstKKT}, \reff{theta>=uj}, or \reff{eq:branch_max}.
These conclusions are shown in \cite{nie2023convex, Nie2020nash}.

\begin{theorem}
Suppose the optimization problem~\reff{eq:pop}
is \reff{eq:findfirstKKT}, \reff{theta>=uj}, or \reff{eq:branch_max}.
Assume $\Theta$ is a generic symmetric positive definite matrix
and the real variety of $\Phi$ is a finite set.
Then, we have:

\begin{itemize}

\item[(i)]
If \reff{eq:pop} is infeasible,
then the moment relaxation \reff{eq:mom}
must be infeasible when the order $k$ is large enough.

\item[(ii)]
Suppose \reff{eq:pop} is feasible.
Then $f_{mom,k} = f_{min}$ and the flat truncation holds
for all $k$ that is big enough.
Furthermore, if the optimization problem~\reff{eq:pop} is
\reff{eq:findfirstKKT} or \reff{theta>=uj},
then $u^{(k)} \coloneqq (y^{(k)}_{e_1},y^{(k)}_{e_2}, \ldots, y^{(k)}_{e_n})$ is the unique minimizer of \reff{eq:pop},
when the order $k$ is large enough.

\end{itemize}
\end{theorem}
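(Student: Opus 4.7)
The plan is to exploit the fundamental structural hypothesis: since the real variety $Z(\Phi)$ is finite, the feasible set $Z(\Phi) \cap S(\Psi)$ of \reff{eq:pop} is a finite subset of $\re^\ell$. This places us in the setting of polynomial optimization over finite real varieties, where the Moment-SOS hierarchy enjoys strong finite convergence properties established in \cite{Nie13FiniteVarieties, nie2013certifying}. My strategy is to reduce both claims (i) and (ii) to these existing results, and then treat the uniqueness refinement separately using the genericity of $\Theta$.

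For (i), infeasibility of \reff{eq:pop} means $Z(\Phi) \cap S(\Psi) = \emptyset$. Since $\mbox{Ideal}[\Phi]$ is zero-dimensional (finite variety), the quotient $\re[z]/\mbox{Ideal}[\Phi]$ is a finite-dimensional real algebra, and standard Positivstellensatz arguments for finite varieties produce a degree-bounded certificate $-1 \in \mbox{Ideal}[\Phi]_{2k} + \mbox{QM}[\Psi]_{2k}$ at some finite order $k$. Pairing this identity with any purported feasible $y$ of \reff{eq:mom} via $\langle\cdot, y\rangle$ gives $-y_0 \ge 0$, contradicting $y_0 = 1$. Hence \reff{eq:mom} is infeasible at that order (and at all larger orders, since increasing $k$ only tightens the relaxation).

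For (ii), feasibility together with the finiteness of $Z(\Phi)$ is exactly the hypothesis under which the convergence theorems of \cite{Nie13FiniteVarieties} guarantee both $f_{mom,k} = f_{min}$ and flat truncation \reff{eq:rank} for all sufficiently large $k$. For the uniqueness refinement in the case of \reff{eq:findfirstKKT} or \reff{theta>=uj}, the feasible set is a finite collection $\{v_1, \ldots, v_s\}$ and the objective $\theta(x) = [x]_1^T \Theta [x]_1$ depends linearly on the entries of $\Theta$. The locus of symmetric positive definite $\Theta$ satisfying $\theta(v_i) = \theta(v_j)$ for some $i \ne j$ is contained in a finite union of proper linear subspaces of the symmetric matrix space, hence has Lebesgue measure zero. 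So for generic $\Theta$ the minimizer $u^*$ is unique.

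The main obstacle is passing from the generic uniqueness of $u^*$ to the identity $u^{(k)} = u^*$ extracted from moments. Here I would invoke the Curto--Fialkow atomic-decomposition theorem underlying flat truncation: a rank-$r$ flat moment solution $y^{(k)}$ decomposes as a convex combination of $r$ Dirac measures supported at global minimizers of \reff{eq:pop}. Generic uniqueness forces $r = 1$, and then $u^{(k)} = (y^{(k)}_{e_1}, \ldots, y^{(k)}_{e_n})$, being the first-order moments of a unit mass at $u^*$, must equal $u^*$ exactly. This closes the proof and matches the statements recorded in \cite{nie2023convex, Nie2020nash}.
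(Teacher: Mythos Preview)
Your outline is essentially correct and aligns with the references the paper defers to (the paper does not give a self-contained proof here, only the sentence ``They are shown in \cite{nie2023convex, Nie2020nash}''). Your reduction to finite-variety convergence \cite{Nie13FiniteVarieties}, flat truncation \cite{nie2013certifying}, and a generic-$\Theta$ uniqueness argument followed by Curto--Fialkow extraction is exactly the content of those references.

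There is, however, one genuine imprecision in your treatment of part~(i). You write ``Since $\mbox{Ideal}[\Phi]$ is zero-dimensional (finite variety), the quotient $\re[z]/\mbox{Ideal}[\Phi]$ is a finite-dimensional real algebra.'' This inference is false in general: the hypothesis is only that the \emph{real} variety $Z(\Phi)$ is finite, which does not force $\mbox{Ideal}[\Phi]$ to be zero-dimensional over $\mathbb{C}$. For instance, $\Phi = \{z_1^2 + z_2^2\}$ has real zero set $\{0\}$ but a one-dimensional complex variety, and $\re[z_1,z_2]/(z_1^2+z_2^2)$ is infinite-dimensional. So the ``standard Positivstellensatz for finite varieties'' you invoke does not apply directly to $\Phi$. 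The fix---and this is precisely the maneuver in \cite{Nie13FiniteVarieties}---is to enlarge $\Phi$ by adjoining univariate polynomials $\prod_{v \in Z(\Phi)}(z_j - v_j)$ for each coordinate $j$; suitable powers of these lie in $\mbox{Ideal}[\Phi] + \Sigma[z]$ by the Real Nullstellensatz, so adding them does not alter the moment relaxation for large $k$, but does make the augmented ideal genuinely zero-dimensional. After this augmentation your argument for (i) goes through. The same augmentation is what underlies the finite-convergence and flat-truncation claims you cite for part~(ii), so your sketch there is already implicitly relying on it.
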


\section{Numerical experiments}
\label{sec:NumExp}

This section presents the numerical experiments of applying Algorithms~\ref{alg:pLME_all} for solving GNEPs with quasi-linear constraints.
In computations, involved polynomial optimization problems are solved globally with Algorithm~\ref{alg:mom-sos},
using the \MATLAB software \texttt{GloptiPoly3} \cite{GloPol3}.
Semidefinite programs are solved using the
\texttt{MOSEK} solver \cite{mosek} with \texttt{Yalmip} \cite{Yalmip}.
The computations were implemented using \MATLAB R2023b on a laptop equipped with a
12th Gen Intel(R) Core(TM) i7-1270P 2.20GHz CPU and 32GB RAM.
The computational results are reported with four decimal places.
For convenience of expression, the constraints are ordered from left to right and from top to bottom in each problem.

\begin{example} We apply Algorithm \ref{alg:pLME_all} to solve some GNEPs from the existing literature. These problems are explicitly given in Appendix~\ref{sc:existing_refs}, each accompanied by its corresponding label.
We report our numerical results in Table \ref{table:appendix}.
The notation $\#x^*$ stands for the number of computed GNEs.
\begin{table}[htb!]
    \centering
    \caption{Numerical results for GNEPs in Appendix~\ref{sc:existing_refs}}
    \label{table:appendix}
    \small
    \begin{tabular}{|l|c|c|}
    \hline
       Problem  & {\small \#$x^*$} & All GNEs $x^* = (x_1^*,x_2^*, \ldots, x_N^*)$ \\
       \hline
        FKA3 & 3 & {\small
        \begin{tabular}{l}
        $ (-0.3805, -0.1227, -0.9932),\, (0.3903, 1.1638),\, (0.0504, 0.0176)$;\\
        $ (-0.8039, -0.3062, -2.3541),\, (0.9701 , 3.1228),\, (0.0751, -0.1281)$;\\
       $(1.9630, -1.3944, 5.1888),\, ( -3.1329, -10.0000),\, (-0.0398, 1.6392)$\end{tabular}}\\ \hline
        FKA4 & 1 & $ (1.0000,1.0000,1.0000),\, (1.0000,1.0000),\, (1.0000,1.0000)$\\ \hline
        FKA8 & 2 & $(0.3333),\,(0.5000),\,(0.6667)$ and  $ (0.0000),\,(1.0000),\, (1.0000)$\\ \hline
        NT59 & 1 & $ (0.7000,0.1600),\, (0.8000,0.1600),\, (0.8000,0.4700)$\\ \hline
        NT510 & 1 & $ (1.7184),\, (1.8413,0.6700),\, (1.2000,0.0823,0.0823)$\\ \hline
				NTGS53 & 2 & \begin{tabular}{l}
				$ (0.0000, 0.5000),  (0.5000, 0.0000);$\\
				$(0.0000, 0.5000),     (0.0000, 0.5000)$
				\end{tabular}\\ \hline
        NTGS54 & 1 & $ (0.1000, 0.4000), (0.1000, 0.4000)$\\ \hline
				FR33 & 4 & \begin{tabular}{l}
				$ (0.0000, 2.0000),\,(0.0000,6.0000)$;\\
				$(0.0000, 0.0000),\, (0.0000, 0.0000)$;\\
				$(1.1876, 1.9062),\, (1.2481, 0.0000)$;\\
				$(1.0000, 2.0000),\, (1.0000, 2.0000)$
				\end{tabular}\\ \hline
        SAG41 & 1 & $ (0.5588,0.5588),  (0.2647,0.2647)$\\ \hline
    \end{tabular}
\end{table}
\end{example}

\begin{example}
\label{ex:0909_0324}
Consider the $2$-player convex GNEP:
\begin{gather*} \text{F}_1(x_{-1}) :\left\{\begin{array}{cl}
\min\limits_{x_1\in\re^4} &\displaystyle (x_{1,1}-1)^2+x_{2,4}(x_{1,2}-1)^2+(x_{1,3}-1)^2\\
& \qquad \qquad \qquad \qquad +(x_{1,4}-2)^2+(\mathbf{1}^Tx_2-1)\mathbf{1}^Tx_1\\
\st & 0 \leq x_1 \leq x_2,\,\,  x_{2,3}(x_{2,3}-1)(x_{2,3}-3) \geq x_{1,4},
\end{array}
\right.
\\
\text{F}_2(x_{-2}) :\left\{\begin{array}{cl}
\min\limits_{x_2\in\re^4} & x_{1,1}x_{2,1}^2-x_{2,2}+x_{1,3}(x_{2,3}-1)^2+x_{1,4}(x_{2,4}+1)^2\\
\st & x_{2,1}- x_{2,2} - x_{1,2} \ge 0,\, 2x_{1,1} - x_{2,1} + x_{2,2} \ge 0,\\
	& x_{2,1} + x_{2,2} + x_{1,1} + x_{1,2}\ge 0,\\
  & 4x_{1,1} - 2x_{1,2}-x_{2,1} - x_{2,2}\ge 0,\,\, x_{2,3}\ge 0,\\
	&   x_{1,3}(3x_{1,3}-1)(x_{1,3}-1)\ge 3x_{2,4},\  3 \geq x_{2,3} +x_{2,4} .
\end{array}
\right.
\end{gather*}
There are 288 $J\in \mc{P}$.
It took around 4449.10 seconds to find all GNEs by Algorithm~\ref{alg:pLME_all}
and the computational time of each $\mc{K}_J$ is between 0.34-147.30 seconds.
The first GNE was found in 149.09 seconds, and we found 6 GNEs in total from 48 $\mc{K}_J$'s,
which are
\[
\begin{array}{|c|} \hline
x_1^* = (0.3333, 0.0000, 0.3333, 0.0000),\quad
x_2^* = (0.6667, 0.6667, 1.0000, 0.0000);\\ \hline
x_1^* = (0.0000, 0.0000, 0.0000, 0.0000),\quad
x_2^* = (0.0000, 0.0000, 3.0000, 0.0000);\\  \hline
x_1^* = (0.5000, 0.0000, 0.0000, 0.0000),\quad
x_2^* = (1.0000, 1.0000, 0.0000, 0.0000);\\  \hline
x_1^* = (0.0000, 0.0000, 1.0000, 0.0000),\quad
x_2^* = (0.0000, 0.0000, 1.0000, 0.0000);\\  \hline
x_1^* = (0.7071, 0.0000, 0.0000, 0.0000),\quad
x_2^* = (0.7071, 0.7071, 0.0000, 0.0000);\\   \hline
x_1^* = (0.0000, 0.0000, 0.0000, 0.0000),\quad
x_2^* = (0.0000, 0.0000, 0.0000, 0.0000). \\  \hline
\end{array}
\]
In particular, we found $5$ GNEs in $\mc{K}_J$ for
\[\begin{aligned}
    J \,=\,& \big(\, \{2, 5, 7, 8\},\, \{1, 4, 5, 6\}\,\big),
    \big(\, \{2, 5, 7, 8\}, \{1, 4, 5, 7\}\,\big),  \big(\, \{2, 5, 7, 8\},\, \{1, 4, 6, 7\}\,\big),\\
    & \big(\, \{2, 5, 7, 9\},\, \{1, 4, 5, 6\}\,\big),
    \big(\, \{2, 5, 7, 9\},\, \{1, 4, 5, 7\}\,\big),
    \mbox{or}\,\big(\, \{2, 5, 7, 9\}, \{1, 4, 6, 7\}\,\big).
\end{aligned}\]

\end{example}

\begin{example}\label{ex:0802_0313}
Consider the 2-player nonconvex NEP
\begin{equation*}
\text{F}_i(x_{-i}) :
\left\{\begin{array}{cl}
\min\limits_{x_i \in \mathbb{R}^{n_i}} & f_i(x_i, x_{-i}) \\
\st  & A_i x_i \geq b_i,
\end{array}\right.
\end{equation*}
where $n_1 = 7$, $n_2 = 5$, and
\begin{gather*}
  		f_1(x) = 3x_{1,1}^2+4x_{1,2}^2+4x_{1,2}x_{2,1}+3x_{1,4}x_{2,4}+4x_{1,6}x_{2,4},\\
     f_2(x) = x_{1,2}x_{2,2}+3x_{1,5}x_{2,4}+x_{1,6}x_{2,2}+x_{2,1}^2+2x_{2,1}x_{2,2}+x_{2,3}^2,\\
     A_1 = \left[\begin{array}{rrrrrrr}
     0  &  -3  &   0  &   2  &   3   &  1   &  3\\
        2&    -1 &    2 &   -2 &    1 &    1  &  -2\\
         -1 &   -1 &    0 &    2 &    2 &    1 &   -3\\
        1  &   1  &   0  &   1   &  0   & -1  &   2\\
         1  &   2  &   0  &   2  &  -3  &  -2  &  -2\\
       -1   &  0   & -2  &   3   &  1   & -1  &  -3\\
        0  &  -1  &  -3  &  -2  &  -2  &  -3   &  2\\
         -3 &    2  &   0  &   1 &   -3  &  -2  &  -3\\
         1   &  1   &  1   &  2   &  3   &  0    & 1
     \end{array}\right],\,\,
     A_2 = \left[\begin{array}{rrrrr}
    2  &  -3   & -1   & -1   & -1\\
        -3 &    4 &    3 &    2 &   -3\\
        1   &  2   &  1  &   0  &   2\\
         2  &  -3   &  2  &   3  &  -1\\
        -3   &  1   &  2   &  2   &  2\\
          2  &   1  &  -2  &  -3   &  4\\
          0   &  2  &   3   &  1   &  2
     \end{array}\right],\\
     b_1 = \begin{bmatrix}
 	1 &
         5 &
         4 &
         2 &
         2 &
         2 &
         2 &
        1 &
         -1
     \end{bmatrix}^T,\,\,\,\quad
      b_2 = \begin{bmatrix}
    1 &
         3 &
         1 &
         1 &
         3 &
         0 &
         -1\end{bmatrix}^T. 
 \end{gather*} 
There are a total of 756 $J\in \mc{P}$.
It took around 1774.40 seconds to find all NEs by Algorithm~\ref{alg:pLME_all}
and the computational time for each $\mc{K}_J$ is between 1.29-12.51 seconds.
The first NE was detected within 151.88 seconds, which is
\begin{align*}
 &   x_1^* = (1.7344,-1.2108,0.8670,0.9041,0.9669,-3.2800,-1.3538),\\
&    x_2^* = (-0.4706, -1.0941, 4.4392,-3.3294,0.2314).
\end{align*}
This is the unique NE and the unique KKT point. It is contained in $\mc{K}_J$ for
\[\begin{aligned}
    J \,=\,& \big(\, \{1,2,3,4,5,7,8\},\, \{1,2,4,5,6\}\,\big), \quad
    \big(\, \{1,2,3,4,6,7,8\},\, \{1,2,4,5,6\}\, \big), \\
    &   \mbox{or}\quad \big(\, \{1,2,3,4,7,8,9\},\, \{1,2,4,5,6\}\,\big).
\end{aligned}\]
\end{example}

\begin{example}
\label{ex:D68_0305}
Consider the 2-player nonconvex GNEP
\begin{equation*}
\text{F}_i(x_{-i}) :
\left\{\begin{array}{cl}
\min\limits_{x_i \in \mathbb{R}^{n_i}} & (-1)^i\|x_1+\mathbf{1}\|^2 + (-1)^{i+1}\|x_2+\mathbf{1}\|^2 \\
\st  & \alpha_{i,1}^Tx_1 + \beta_{i,1}^Tx_2 + \gamma_{i,1} \geq 0, \\
& \alpha_{i,2}^Tx_1 + \beta_{i,2}^Tx_2 + \gamma_{i,2} \geq 0, \\
& 1 \geq \mathbf{1}^T x_i,\,\, x_i \geq 0,
\end{array}\right.
\end{equation*}
where $n_1 = 4$, $n_2 = 2$, and $\gamma_{1,1} = 2,\,\,\gamma_{1,2} = 4,\,\,\gamma_{2,1} = -1,\,\,\gamma_{2,2} = 4$,
\begin{gather*}
    \alpha_{1,1} = \left[\begin{array}{r}
        5\\-4\\-4\\1
    \end{array}\right], \,\,\alpha_{1,2} = \left[\begin{array}{r}
        5\\5\\5\\-3
    \end{array}\right],\,\, \alpha_{2,1} = \left[\begin{array}{r}
        -2\\2\\-4\\5
    \end{array}\right],\,\,\alpha_{2,2} = \left[\begin{array}{r}
        0\\-4\\-5\\-2
    \end{array}\right],
\\
    \beta_{1,1} = \left[\begin{array}{r}
        4\\-2
    \end{array}\right], \,\,\beta_{1,2} = \left[\begin{array}{r}
        5\\5
    \end{array}\right],\,\, \beta_{2,1} = \left[\begin{array}{r}
        -5\\3
    \end{array}\right],\,\,\beta_{2,2} = \left[\begin{array}{r}
        -2\\-5
    \end{array}\right].
\end{gather*}
There are a total of 280 $J \in \mc{P}$.
It took around 171.49 seconds to find all GNEs by Algorithm~\ref{alg:pLME_all}
and the computational time for each $\mc{K}_J$ is between 0.06-24.84 seconds.
The first GNE was detected within 127.56 seconds, which is
\begin{equation*}
x_1^* = (0.0000, 0.0000, 0.0000, 1.0000),\quad
x_2^* = (0.8387, 0.0645).
\end{equation*}
This is the unique GNE. It is contained in $\mc{K}_J$ for $J = \big(\{3,4,5,6\},\{1,2\}\big)$.
We remark that this problem has seven KKT points.
\end{example}

\begin{example}
\label{ex:0324_0129}
Consider the 2-player nonconvex GNEP
\begin{equation*}
\text{F}_i(x_{-i}) : \left\{\begin{array}{cl}
\underset{x_i \in \mathbb{R}^{3}}{\text{min}} & f_i(x) \\
\st  & A_i x_i \geq B_i[x_{-i}]_1 + d_i(x_{-i}),
\end{array}\right.
\end{equation*}
where
\begin{gather*}
    f_1(x) = x_{1,1}(x_{1,1}-2x_{2,1}) + x_{1,2} \cdot \mathbf{1}^Tx_2, \quad
    f_2(x) = x_{2,1}(2+2x_{2,2}) + x_{2,3} \cdot \mathbf{1}^Tx_1,
\\
    A_1 = \left[\begin{array}{rrr}
        -4&    -1&    -2\\
         0 &    3 &    4\\
        -1  &   5  &  -3\\
         5   & -1   & -3\\
         5    &-4    & 0\\
         0     &4    &-5\\
        -3     &4    &-5
    \end{array}\right],\,\,\,
B_1 = \left[\begin{array}{rrrr}
     0&    -5&    -4&   -2\\
    -1 &    3 &    6 &   -1\\
    -6  &  0  &  2  &   0\\
    -5   &  2   &  3   &  0\\
    -5&     3    & 0    & 5\\
     0 &    0    &-1     &3\\
     3  & -1 &  0   &  0
    \end{array}\right],\,\,\, d_1 = \left[\begin{array}{c} x_{2,3}^2 \\ x_{2,2}^2\\0\\ -x_{2,2}^2 \\0\\0\\0\end{array}\right],
\\
A_2 = \left[\begin{array}{rrr}
    -5&     2&    -1\\
     2 &    3 &    2\\
     1  &  -1  &   3\\
     5   &  0   & -2\\
    -3    &-4    & 1\\
     5     &4    &-1\\
    -3     &4    & 0
    \end{array}\right],\,\,\,
B_2 = \left[\begin{array}{rrrr}
    -5&     6&     4&    0\\
    -4 &   -2 &    4 &   5\\
    -4  &  3  &   6  & -1\\
     0   &  6   & -1   &  4\\
    -4   &-1    &-3    & 3\\
    -1    & 3    &-4    &-2\\
    -1    & 2    &0    &0
    \end{array}\right],\,\,\, d_2 = \left[\begin{array}{c} 0 \\ 0\\0\\ x_{1,3}^2 \\0\\-x_{1,2}^2\\0\end{array}\right].
    \end{gather*}
\noindent
There are a total of 1225 $J \in \mc{P}$. Five of them contain GNEs.
It took around 192.54 seconds to find all GNEs by Algorithm~\ref{alg:pLME_all}
and the computational time for each $\mc{K}_J$ is between 0.10--2.40 seconds.
The first GNE was detected within 116.59 seconds.
We found 2 GNEs from 5 $\mc{K}_J$'s in total, which are
\begin{equation*}
\begin{array}{ll}
x_1^* = (0.2075,0.7518,-0.0779),  & x_2^* = (0.2258,0.4260,0.4706); \\
x_1^* = (-0.3079,0.7901,0.1566), & x_2^* = (-0.3011,0.7604,0.2406).
\end{array}
\end{equation*}
We remark that these GNEs are also the only two KKT points for this problem.
\end{example}

\begin{example}\label{ex:random}
Consider the convex GNEP with the $i$th player's optimization
\be \label{rand:cvxGNEP}
\text{F}_i(x_{-i}) : \left\{\begin{array}{cl}
\min\limits_{x_i \in \mathbb{R}^{n_i}} &  f_i(x_i, x_{-i}) \\
\st & A_ix_i \geq b_i(x_{-i}) \coloneqq b_i-B_ix_{-i},
\end{array}\right.
\ee
where the vector $b_i(x_{-i})$ has length $m_i$
and the objective $f_i$ is in the form
\[
f_i(x_i, x_{-i})  \, = \, c_i^Tx + x^TG_ix + (x^{[2]})^T H_i x^{[2]}.
\]
In the above, $c_i \in \re^n$, $G_i \in \re^{n \times n}$
is symmetric positive semidefinite and $H_i \in \re^{n \times n}$
is symmetric positive semidefinite with nonnegative entries.
For the above choice, $f_i$ is a convex
polynomial function in $x$ (see \cite[Example~7.1.4]{MPO}).
We use the \MATLAB function \texttt{unifrnd} to generate random matrices $A_i, b_i$,
and $B_i$ for GNEPs of different sizes. 
We generate the convex polynomial $f_i$ randomly as
$c_i = \mathtt{randn}(n,1)$,
$G_i = R_1^T R_1$ with $R_1 = \mathtt{randn}(n)$, and
$H_i = R_2^T R_2$ with $R_2 = \mathtt{rand}(n)$.
We create 10 instances for each case and solve them with Algorithm~\ref{alg:pLME_all}.
The computational results are reported in Table \ref{table:random}.
For each instance, $\#(\mc{K}_J \ne \emptyset)$ counts the number of $\mc{K}_J$
that contains at least one KKT point and $\#\mbox{GNEs}$ counts the number of all GNEs.
The ``Avg. Time of Alg.~\ref{alg:pLME_J} for a single $\mc{K}_J$''
gives the average time (in seconds) taken by Algorithm \ref{alg:pLME_J}.
\begin{table}[ht]
\caption{Computational results for randomly generated convex GNEPs as in \reff{rand:cvxGNEP}.}
\label{table:random}}
{\tiny
\[\begin{array}{|c|c|c|c|c|c|}
\hline
N & \begin{array}{c}
    (n_1,\ldots,n_N)\\
    (m_1,\ldots,m_N)
\end{array} & |\mc{P}| & \mbox{\#}(\mc{K}_J \ne \emptyset) & \mbox{\#GNEs} & \begin{array}{c}
\mbox{Avg. Time of Alg.~\ref{alg:pLME_J}}\\
\mbox{for a single } \mc{K}_J
\end{array}\\ [5pt]
\hline
\multirow{4}{*}{2}
& \begin{array}{c}
  (3,4)\\
  (6,6)
\end{array} & 300 & \begin{array}{c} 300,300,300,300, 300,\\300, 300, 300, 300, 283\end{array}&  \begin{array}{c}1 ,1,1, 1, 1, \\ 1, 1,1, 1, 1\end{array} &  \begin{array}{c} 11.8,14.8, 14.7, 16.9,20.0,\\21.7, 12.6, 14.5, 12.6, 8.0 \end{array}\\ [5pt] \cline{2-6}
& \begin{array}{c}
  (3,3)\\
  (6,6)
\end{array} & 400 & \begin{array}{c} 400, 400, 400, 400, 400,\\ 400, 400, 380, 400 ,396\end{array}&  \begin{array}{c}3,1,2,1,1,\\ 1,2,1,1,1\end{array} &  \begin{array}{c}7.5, 8.7,12.0, 8.7, 9.2\\ 10.1,13.6, 11.6,12.9,13.4\end{array}\\ [5pt] \hline
\multirow{4}{*}{3}& \begin{array}{c}
  (2,3,3)\\
  (4,4,4)
\end{array} & 96 & \begin{array}{c}73, 96, 72, 96, 96, \\ 72, 96, 96, 96,96 \end{array}&  \begin{array}{c}3, 3, 2, 1, 1, \\ 2, 1, 1,1,1\end{array} &  \begin{array}{c}25.9, 31.2, 22.9, 23.0, 22.8, \\ 22.9, 30.6, 25.7, 15.7, 26.4\end{array}\\ [5pt]\cline{2-6}
& \begin{array}{c}
  (2,2,3)\\
  (4,4,5)
\end{array} & 360 & \begin{array}{c} 360,334, 360,360, 360\\360, 360,360,360,326\end{array}&  \begin{array}{c} 2,3,2,2,1\\ 1,1,2,1,1\end{array} &  \begin{array}{c} 16.6,14.9,19.6, 19.6,20.7\\26.8,24.1,24.4,17.8,25.5\end{array}\\[5pt] \hline
\multirow{3}{*}{4}& \begin{array}{c}
  (1,1,1,1)\\
  (3,3,3,3)
\end{array} & 81 & \begin{array}{c} 81, 81, 81, 81, 81,\\81, 81, 81, 81, 81\end{array}&
\begin{array}{c}1, 5, 1, 3, 1 \\ 3,1, 1, 1,1\end{array} &  \begin{array}{c} 2.5, 5.8, 1.9, 9.8, 2.4,\\ 2.7, 3.8, 2.7, 2.5, 3.3\end{array}\\ [5pt]\cline{2-6}
& \begin{array}{c}
  (1,2,2,2)\\
  (4,4,4,4)
\end{array} & 864 & \begin{array}{c} 863,864,841,864,864,\\864,432,864,864,864\end{array}&
\begin{array}{c}2,1,1,1,1, \\ 3,1,1,1,1\end{array}&   \begin{array}{c}9.3,12.4,20.0,25.5,11.1,\\13.0,18.6,22.8,23.2,17.6\end{array}\\ [5pt] \hline
\multirow{3}{*}{5}& \begin{array}{c}
  (1,1,1,1,1)\\
  (3,3,3,3,3)
\end{array} & 243 & \begin{array}{c}81,243,243,243,243,\\243,243,243,243,243\end{array}&  \begin{array}{c} 1,3,1,3,3,\\3,3,1,3,1\end{array} &  \begin{array}{c}1.2,2.6,2.9,2.9,3.3,\\3.7,4.3,5.5,7.2,2.5\end{array}\\ [5pt]\cline{2-6}
& \begin{array}{c}
  (1,1,1,1,1)\\
  (3,4,4,4,4)
\end{array} & 768 & \begin{array}{c} 195,768,768,768,768, \\ 768,768,768,768,192\end{array}&
\begin{array}{c}2,3,3,1,1, \\ 3,1,1,4,1\end{array}&  \begin{array}{c}1.3,4.1,15.4,7.6,8.9, \\ 10.2,12.1,16.6,94.4,10.1\end{array}\\ [5pt] \hline
\end{array}\]
\end{table}
Besides that, to visually illustrate the computational scalability of our method, we present in Figure~\ref{fig:time_analysis} for the relation between the computational time and the problem sizes of the GNEP in (\ref{rand:cvxGNEP}) for $N=2$.
\begin{itemize}
\item Figure~\ref{fig:time_analysis}(a) plots the computational time (in seconds) required to find GNEs within a single non-empty set $\mc{K}_{J}$. The time is benchmarked against varying problem dimensions $(n_1,n_2)$, while keeping the number of constraints proportional $(m_i = n_i+1)$. This figure highlights the computational cost associated with finding GNEs for a fixed $J\in \mc{P}$.
\item Figure~\ref{fig:time_analysis}(b) demonstrates the impact of the number of constraints (and thus, $|\mc{P}|$). Here, we fix the dimensions $n_1=n_2=2$ and plot the total computational time against an increasing number of constraints $(m_1,m_2)$.
Each data point is annotated with the corresponding cardinality $|\mc{P}|$.
\end{itemize}
\begin{figure}[ht]
    \centering 
    
    \begin{subfigure}[b]{0.49\textwidth}
        \centering
        \includegraphics[width=\linewidth]{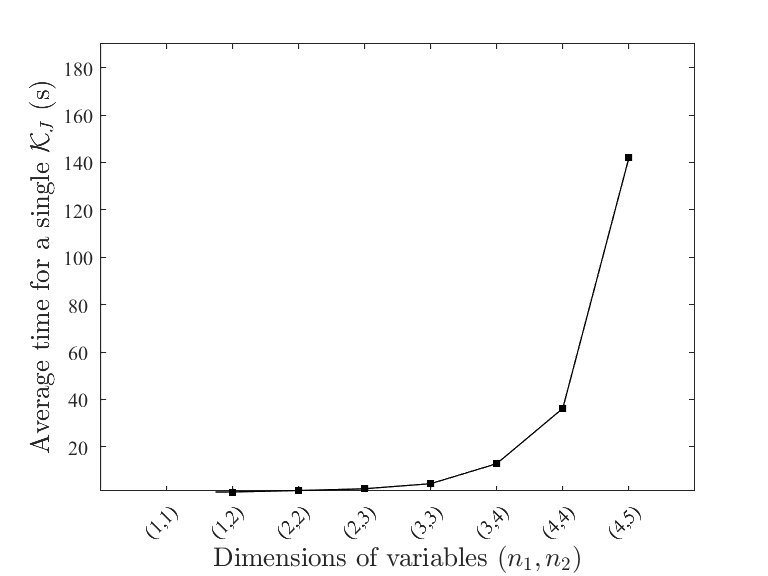}
        \caption{Average computation time for solving a single $\mathcal{K}_J$ versus variable dimensions $(n_1, n_2)$.}
        \label{fig:nvs_time}
    \end{subfigure}
    \hfill
    \begin{subfigure}[b]{0.49\textwidth}
        \centering
        \includegraphics[width=\linewidth]{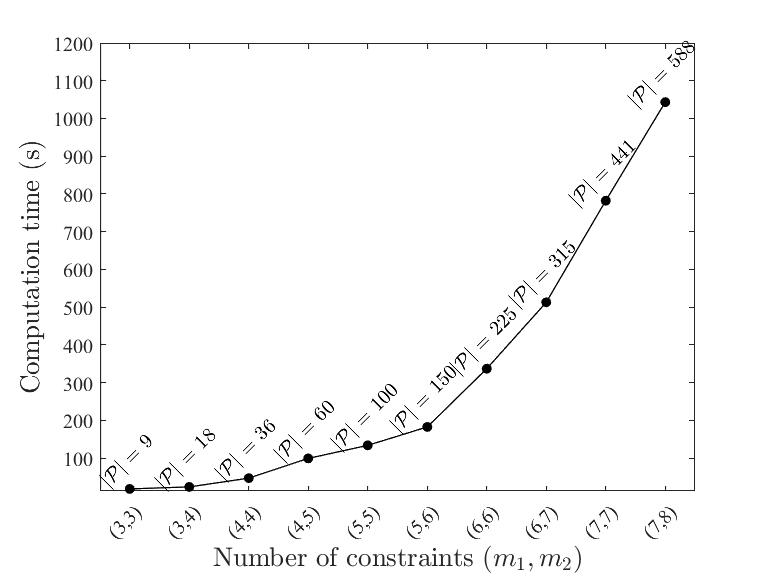}
        \caption{Total computation time versus the number of constraints $(m_1, m_2)$.}
        \label{fig:mvs_time} 
    \end{subfigure}
        \caption{The relation between the computational time and the problem sizes of GNEPs in Example~\ref{ex:random} with $N=2$.} 
    \label{fig:time_analysis}
\end{figure}

In this experiment, we observed some limitations of our method.
First, as the values of $N$, $n$, and $m$ increase, $|\mathcal{P}|$ grows rapidly.
Consequently, the overall computational time of Algorithm~\ref{alg:pLME_J} will increase quickly, although the optimization problems~(\ref{eq:findfirstKKT}), (\ref{eq:verifyGNE}), (\ref{theta>=uj}), and (\ref{eq:branch_max}) in Algorithm~\ref{alg:pLME_J} can be solved with low computational cost.
Second, our approach is sensitive to scalability.
Specifically, if we solve the optimization problem~(\ref{rand:cvxGNEP})
with a large value of $n_1 + \cdots + n_N$,
Algorithm~\ref{alg:pLME_J} may not produce accurate results due to numerical issues.
\end{example}

\begin{example}
\label{ex:lifted}
Consider the 2-player nonconvex GNEP
\begin{equation*}
\begin{array}{clccl}
\underset{x_1 \in \mathbb{R}^4}{\text{min}} & 3\|x_1\|^2 + x_{1,2} \cdot \mathbf{1}^Tx_1 & \vline & \underset{x_2 \in \mathbb{R}^4}{\text{min}} & -\|x_2\|^2 - x_{2,3} \cdot \mathbf{1}^Tx_2  \\
\st & A_1x_1\geq b_1(x_2), & \vline & \st & A_2x_2 \geq b_2(x_1),\\
\end{array}
\end{equation*}
where
\begin{gather*}
       A_1 = \left[\begin{array}{rrrr}
       0 & 1 & 1 & 3\\
       -1 & -2 & 3 & 0\\
        1 &   -1  &    1   &   0\\
        0    & -1   &   2    &  2\\
       -1    & -1     & 1      &2
       \end{array} \right],\,\,
        A_2 = \left[\begin{array}{rrrr}
        0 & 1 & 1 & 3\\
        -2 & 2 & -1 & 1\\
         0  &  -2  &   1&    -2\\
     0  &  -1    & 0    & 2\\
     2  &   1    & 0    &-2
       \end{array} \right],\\
b_1(x_2) = \bbm
       5-\|x_2\|^2 \\
       \|x_2\|^2 \\
        3x_{2,1}x_{2,2}+2x_{2,3}x_{2,4}-x_{2,2} \\
        3+x_{2,1}x_{2,3}\\
        2+x_{2,2}x_{2,4}
       \ebm,\,\,
b_2(x_1) = \bbm
       5-\|x_1\|^2\\
       \|x_1\|^2 \\
        2-2x_{1,3} +x_{1,1}x_{1,2}+x_{1,3}x_{1,4} \\
        -1 +x_{1,1}x_{1,3} \\
        -2 +x_{1,2}x_{1,4}
       \ebm.
\end{gather*}
There are 25 sets $J \in \mc{P}$.
It took around $16.05$ seconds to find all GNEs by Algorithm~\ref{alg:pLME_all}
and the computational time for each $\mc{K}_J$ is between $0.13-2.80$ seconds.
The first GNE was detected within $4.91$ seconds, which is
\begin{equation*}
    x_1^* = (-0.4085,-1.1070,1.9636,0.1845),\,\, x_2^* = (-2.0032,1.4764,1.5146,-0.1629).
\end{equation*}
This is the unique GNE and the unique KKT point. It is contained in $\mc{K}_J$ for
\[\begin{aligned}
    J \,=\,& \big(\, \{1,2,3,4\},\, \{2,3,4,5\}\,\big), \quad \big(\,\{1,2,3,5\},\, \{2,3,4,5\}\,\big), \\
    & \big(\, \{1,2,4,5\},\, \{2,3,4,5\}\, \big),\quad \mbox{or}\quad
    \big(\, \{2,3,4,5\},\, \{2,3,4,5\}\, \big).
\end{aligned}\]

We also implemented the homotopy method in \cite{Lee2023} for finding GNEs of this GNEP.
The polyhedral homotopy continuation is implemented in the Julia software {\tt homotopycontinuation.jl} \cite{homotopycontinuation}, which found $17100$ complex roots to the complex KKT system, including $1860$ real solutions. 
Among these real roots,
we got the same KKT point $(x_1^*, x_2^*)$,
which is verified to be a GNE by solving (\ref{eq:verifyGNE}).
It took around $210.88$ seconds for the polyhedral homotopy
to solve the complex KKT system and $1.50$ seconds to verify the GNE.
We would like to remark that since the number of computed complex roots of the complex KKT system is less than its mixed volume that equals $24611$, the homotopy method cannot certify the uniqueness of the computed GNE; see \cite[Theorem~3.2]{Lee2023}.

Moreover, we tested the augmented Lagrangian method in \cite{kanzow2016}
and the interior point method in \cite{dreves2011solution} for finding GNEs.
The augmented Lagrangian method cannot find a GNE after $1000$
outer iterations because the augmented Lagrangian subproblem cannot be solved accurately.
Also, the interior point method failed to find a GNE
within $1000$ iterations since the Newton directions are usually not descent directions.
\end{example}

\subsection{GNEPs from real-world applications}
GNEPs arise in a variety of real-world scenarios.
In this subsection, we present an example that illustrates how GNEPs can be used to model such practical applications.

\begin{example}
\label{ex:JI}
Consider the environmental pollution control problem for two countries under the mechanism of \textit{joint implementation} \cite{Breton}.
For each $i,j\in [2]$, $x_{i,0}$ is the (gross) emissions for the industrial outputs of country $i$, and $x_{i,j}$ represents the investment by country $i$ to the environmental projects at country $j$.
Then, the accounted-for-emission for country $i$ is $x_{i,0} - \gamma_1x_{i,1} + \gamma_2x_{i,2} $, which must be kept below a prescribed level $E_i>0$.
Also, its net emission is $R_i(x) =  x_{i,0} - \gamma_i(x_{1,i} + x_{2,i}).$
For the country $i$, assume the revenue from the industrial outputs is $x_{i,0}(b_i-0.5x_{i,0})$, the cost of investments in environmental projects is  $0.5(x_{i,i}^2+(x_{i,3-i}+x_{3-i,3-i})^2)$,
and the environmental cost incurred by pollution is quadratic in $R_i$ that equals $\sum_{j=1}^2 (c_{i,j}R_j^2 + d_{i,j}R_j)$.
Then, the optimization problem for each country is formulated as follows:
\begin{equation*}
\text{F}_i(x_{-i}) : \left\{\begin{array}{cl}
\underset{x_i \in \mathbb{R}^3}{\text{min}} & -x_{i,0}(b_i-0.5x_{i,0}) + 0.5(x_{i,i}^2+(x_{i,3-i}+x_{3-i,3-i})^2) \\
& \qquad\qquad\qquad\qquad\qquad+ \sum\limits_{j=1}^2 (c_{i,j}R_j(x)^2 + d_{i,j}R_j(x))\\
\st & b_i - x_{i,0}\geq 0,\, x_{i,1} \geq 0, \, x_{i,2} \geq 0,\\
    & E_i - (x_{i,0} - \gamma_1x_{i,1} + \gamma_2x_{i,2}) \ge 0,\\
    & x_{1,0} - \gamma_i(x_{1,1} + x_{2,1})\ge 0,\\
    & x_{2,0} - \gamma_i(x_{1,2} + x_{2,2})\ge 0,
\end{array}\right.
\end{equation*}
where parameters are set as $E_1 = 2, \, E_2 = 4,$ and
\begin{equation*}
\begin{array}{llllllll}
  b_1 = 2,& b_2 = 3,&  c_{1,1} = 0.2, & c_{1,2} = 0.3, & c_{2,1} = 0.4, & c_{2,2} = 0.2, \\
  d_{1,1} = 0.5, &  d_{1,2} = 0.75, & d_{2,1} = 0.8, & d_{2,2} = 1.2,  & \gamma_1 = 1.2, & \gamma_2 = 1 .
\end{array}
\end{equation*}
There are a total of 195 $J \in \mathcal{P}$.
It took around 164.60 seconds to find all GNEs by Algorithm~\ref{alg:pLME_all} and the computational time for each $\mathcal{K}_J$ is between 0.99-3.55 seconds.
The first GNE was detected within 3.62 seconds, which is
\begin{equation*}
    x_1^* = (1.4455, 0.6655, 0.0000),\quad
    x_2^* = (1.6667, 0.4225, 1.3333).
\end{equation*}
This is the unique GNE.
It is contained in 75 $\mathcal{K}_J$'s, and it is the only KKT point.
\end{example}

\subsection{GNEPs with shared constraints}

We consider GNEPs with shared constraints,
i.e., all players have the same constraints.
Solving this class of GNEPs is particularly hard.
This is because when there are shared constraints, the complex variety given by the KKT equations has a positive dimension (see \cite{nie2022algebraic}).
Consequently, the critical set $\mc{K}$ is often an infinite set.
For this class of GNEPs, our method may not be able to find all GNEs.
However, it is still applicable for such GNEPs that have quasi-linear constraints,
and it may still get some GNEs.
For instance, when the GNEP is convex, our method guarantees to find at least one GNE,
if there exists one; see Theorem~\ref{eq:solveone}.
The following is an example that has shared constraints.

\begin{example}
\label{ex:shared}
Consider the 2-player GNEP with shared constraints:
\begin{equation*}
\begin{array}{clccl}
\underset{x_1 \in \mathbb{R}^4}{\text{min}} & f_1(x_1,x_2)& \vline &\underset{x_2 \in \mathbb{R}^4}{\text{min}} & f_2(x_1,x_2)  \\
\st & 0\le x_{1,j}\le 1, \ (j\in [4]), & \vline &  \st &  -1\le  x_{2,j} \le 1, \ (j\in [4]), \\
& 2-\mathbf{1}^Tx_1+\mathbf{1}^Tx_2 =  0, & \vline &  & 2-\mathbf{1}^Tx_1+\mathbf{1}^Tx_2 = 0.\\
\end{array}
\end{equation*}
Here the objective functions $f_1$ and $f_2$
are the same as in Example~\ref{ex:lifted}.
In this setting, the GNEP is nonconvex.
We implement Algorithm~\ref{alg:pLME_all} and it takes around 12.17 seconds to find one GNE
\begin{equation*}
\begin{array}{ll}
x_1^* = (0, 0, 0, 0),  & x_2^* = (-1, -1, -1, 1).
\end{array}
\end{equation*}
Then, it finds a KKT point $u$ such that $u_1 = \mathbf{0}\in\re^4, u_2 = (-0.75,-0.75,0.25,-0.75)$, which is not a GNE.
At the KKT point $u$, we cannot find an appropriate $\delta$  such that $\theta_{\max} = \theta(u)$ by solving (\ref{eq:branch_max}) (see Steps~3 and 4 of Algorithm~\ref{alg:pLME_J}).
This may arise because there are infinitely many KKT points near $u$.
Therefore, though our method successfully finds a GNE, it cannot find all of them.
\end{example}

Note that the ADMM-type method \cite{Borgens2021} is applicable to GNEPs with shared constraints given by equalities.
We implement this approach with the fixed regularization (see \cite[Algorithm~4.1]{Borgens2021}) and parameters settled in the same way as in \cite{Borgens2021} for solving the GNEPs in Example~\ref{ex:shared}.
The Moment-SOS hierarchy is applied to solve each player's subproblem.
Moreover,  GNEPs with shared constraints can be reformulated as variational inequality (VI) problems $\mbox{VI}(X,\mathcal{F})$, where $X$ is the feasible set of the GNEP and $\mathcal{F}\coloneq (\nabla_{x_i} f_i)_{i=1}^N$; see \cite{Facchinei2010generalized,Facchinei2010} and references therein.
Note that the feasible sets $X_i$'s of the GNEP in Example~\ref{ex:shared} are convex.
We apply the software {\tt path} \cite{path} in MATLAB to solve the KKT system of $\mbox{VI}(X,\mathcal{F})$.
For both methods, we use three different initial points: $x^{(0)} = \mathbf{0}$, $x^{(0)} = 0.1 \cdot \mathbf{1}$, and $x^{(0)} = \mathbf{1}$ (all in $\re^8$), together with the same initial Lagrange multipliers given by all zero vectors.
The numerical results are reported in Table~\ref{tab:admmvi_comparison}.
In the table,
the `Initial point' represents the initial point for the iterative methods.
Also, `KKT' means that the method converges to a KKT point that is not a GNE, `GNE' means that the method obtains a GNE,
and `NC' means that the method does not converge.
\begin{table}[ht!]
\centering
\caption{Comparison of Algorithm~\ref{alg:pLME_all} with the ADMM and VI approachs}
\label{tab:admmvi_comparison}
\begin{tabular}{|c|c|c|c|c|c|c|c|c|}
\hline
& \multicolumn{3}{c|}{ADMM} & \multicolumn{3}{c|}{VI}  & \multicolumn{1}{c|}{Alg.~\ref{alg:pLME_all}}\\
\cline{1-8}
Initial point & \multicolumn{1}{c|}{$x^{(0)} = \mathbf{0}$} & \multicolumn{1}{c|}{$0.1 \cdot \mathbf{1}$} & $ \mathbf{1}$ &  \multicolumn{1}{c|}{$x^{(0)} = \mathbf{0}$} & \multicolumn{1}{c|}{$ 0.1 \cdot \mathbf{1}$} & $\mathbf{1}$ & - \\
\hline
Result & \multicolumn{1}{c|}{NC} & \multicolumn{1}{c|}{NC} & NC & KKT & KKT & KKT &  GNE  \\ \hline
\end{tabular}
\end{table}
For the nonconvex GNEP in Example~\ref{ex:shared}, the ADMM does not converge for all three given initial points, as the subproblem cannot be solved at some iterates.
Interestingly, if we set $x^{(0)} = - \mathbf{1}$, then the ADMM method found a GNE $x_1 = {\bf 0},\ x_2 = (1,-1,-1,-1)$.
For the VI method, it found the KKT point $x_1 = {\bf 0},\ x_2 = (-1,-1,1,-1)$ when $x^{(0)} = \mathbf{0}$, and found another KKT point $x_1 = {\bf 0},\ x_2 =  (-0.5,-1,0.5,-1)$ when $x^{(0)} = 0.1\cdot \mathbf{1} \ \mbox{and}\ \mathbf{1}$, all of which cost around 0.1 second.
However, neither of them is a GNE.

\subsection{Comparison with Gauss-Seidel method}\label{sec:GS}
In this subsection, we compare Algorithm~\ref{alg:pLME_all} with the Gauss-Seidel method \cite{Nie2020gs}.
We apply the Gauss-Seidel method to solve GNEPs in seven examples (Examples \ref{ex:0909_0324}-\ref{ex:0324_0129} and \ref{ex:lifted}-\ref{ex:shared}), using three different initial points: $x^{(0)} = \mathbf{0}$, $x^{(0)} = 0.1 \cdot \mathbf{1}$, and $x^{(0)} = \mathbf{1}$ (all with proper dimensions).
We set a maximum of $300$ loops for the Gauss-Seidel method, in which the Moment-SOS hierarchy is applied to solve each player's normalized subproblem.
The results are reported in Table~\ref{tab:gs_comparison}.
In the table, the notation \#$x^*$ indicates the number of computed GNEs, and the column “Time” reports the time (in seconds) required by Algorithm~\ref{alg:pLME_all} to find the first GNE.
For the Gauss-Seidel method, `NC' indicates that the Gauss-Seidel method does not converge, while successful cases show the corresponding execution time (in seconds).
\begin{table}[ht!]
\centering
\caption{Comparison of Algorithm~\ref{alg:pLME_all} and the Gauss-Seidel method}
\label{tab:gs_comparison}
\begin{tabular}{|c|cc|ccc|}
\hline
\multirow{2}{*}{Example} & \multicolumn{2}{c|}{Algorithm~\ref{alg:pLME_all}}  & \multicolumn{3}{c|}{Gauss-Seidel} \\
\cline{2-6}
 & \multicolumn{1}{c|}{\#$x^{*}$} & Time &  \multicolumn{1}{c|}{$x^{(0)} = \mathbf{0}$} & \multicolumn{1}{c|}{$x^{(0)} = 0.1 \cdot \mathbf{1}$} & $ x^{(0)} = \mathbf{1}$ \\
\hline
\ref{ex:0909_0324} & \multicolumn{1}{c|}{6} & 149.09 & \multicolumn{1}{c|}{NC} & \multicolumn{1}{c|}{NC} & NC \\ \hline
\ref{ex:0802_0313} & \multicolumn{1}{c|}{1} & 151.88 & \multicolumn{1}{c|}{23.47} & \multicolumn{1}{c|}{28.05} & 18.39 \\ \hline
\ref{ex:D68_0305} & \multicolumn{1}{c|}{1} & 127.56 & \multicolumn{1}{c|}{NC} & \multicolumn{1}{c|}{NC} & NC \\ \hline
\ref{ex:0324_0129} & \multicolumn{1}{c|}{2} & 116.59 & \multicolumn{1}{c|}{NC} & \multicolumn{1}{c|}{NC} & NC \\ \hline
\ref{ex:lifted} & \multicolumn{1}{c|}{1} & 4.91 & \multicolumn{1}{c|}{NC} & \multicolumn{1}{c|}{NC} & NC \\ \hline
\ref{ex:JI} & \multicolumn{1}{c|}{1} & 3.62 &\multicolumn{1}{c|}{12.88} & \multicolumn{1}{c|}{NC} & NC \\ \hline
\ref{ex:shared} & \multicolumn{1}{c|}{-} & 17.43 &\multicolumn{1}{c|}{119.13} & \multicolumn{1}{c|}{NC} & NC \\ \hline
\end{tabular}
\end{table}

The Gauss-Seidel method requires solving normalized subproblems of every player in each loop. Consequently, the infeasibility of any subproblem ${\bf F}_i(\xmi^{(l)})$ with $i\in [N]$ and some iterate $x^{(l)}$ will immediately terminate the iteration and lead to a failure of computing GNEs.
This is particularly the case for GNEPs with many constraints, which are of central interest to the pLME method.
Moreover, the performance of Gauss-Seidel method depends on the choice of the initial points, due to the nature of iterative methods.
Indeed, in our experiment, the feasible set $X_i(x^{(l)}_{-i})$ becomes empty for some $i\in [N]$ when we implement the Gauss-Seidel method to solve GNEPs in Examples~\ref{ex:0909_0324}, \ref{ex:D68_0305}, \ref{ex:0324_0129} and \ref{ex:lifted} for all choices of initial points, 
and Examples~\ref{ex:JI} and \ref{ex:shared} for all nonzero initial points.

\section{Conclusion and Discussions}
\label{sc:con}

This paper studies GNEPs with quasi-linear constraints that are defined by polynomials.
We propose a new partial Lagrange multiplier expression approach with KKT conditions.
By using partial Lagrange multiplier expressions,
we represent KKT sets of such GNEPs by a union of simpler sets with convenient expressions.
This helps to relax GNEPs into finite groups of branch polynomial optimization problems.
The latter can be solved efficiently by Moment-SOS relaxations.
Under some genericity assumptions, we develop algorithms that either find all GNEs
or detect their nonexistence.
Numerical experiments are given to show the efficiency of our method.
There is great potential for our method.
It can be interesting future work to apply our method for solving GNEPs arising from machine learning and data science applications.

We remark that GNEPs with quasi-linear constraints are typically more difficult than
GNEPs with linear constraints.
To see this, we compare their algebraic degrees,
which count numbers of complex solutions to KKT systems \cite{nie2022algebraic}.
For the convenience of our discussion, we suppose that for each $i\in[N]$,
$f_i(x)$ is a quadratic polynomial in $x$, $A_i$ is a $m_i$-by-$n_i$ matrix,
and every $b_{i,j}\, (j\in[m_i])$ is a polynomial in $\xmi$ whose degree equals $d_{i,j}$.
Without loss of generality, we also assume that $m_i\le n_i$ for each $i\in[N]$,
and all constraints are active at every KKT point (otherwise,
one may compute the algebraic degree by enumerating all active sets,
see \cite[Theorem~5.2]{nie2022algebraic}).
When all $f_i(x)$, $A_i$ and $b_i(\xmi)$ are generic, the algebraic degree is
$\prod_{i=1}^N\prod_{j=1}^{m_i} \max(1,d_{i,j}).$
In particular, when the GNEP has only linear constraints
(i.e., $d_{i,j} \le 1$ for all $i,j$),
the algebraic degree is equal to $1$, which is much less than that
for general cases of quasi-linear constraints
(i.e., $d_{i,j}$ are greater than $1$).
For instance, when $N=2$, $m_1=m_2=4$, and $d_{i,j}=2$ for all $i,j$,
the algebraic degree for the GNEP with quasi-linear constraints is $2^8 = 256$,
under some genericity assumptions.
We refer to \cite{nie2022algebraic} for more details about algebraic degrees.

Moreover, we note that our method aims at finding GNEs and uses the KKT conditions to compute candidate solutions.
When the GNEP is convex, GNEs are equivalent to the KKT points; thus, our method finds GNEs directly.
However, for nonconvex GNEPs that are of the primary interest of this paper, we need to select GNEs from the KKT points by solving polynomial optimization problems.
Note that for nonconvex GNEPs, all players' nonconvex optimization subproblems are globally minimized at a GNE.
There exist approaches characterizing GNEs directly by some equivalent reformulations for GNEPs, such as the Nikaido-Isoda function reformulation (see \cite{Facchinei2010}), and the reformulation that uses Putinar's Positivstellensatz in \cite{Couzoudis2013}.
However, the reformulations are usually extremely difficult to solve in computational practice when the GNEP is nonconvex.
Also, the Gauss-Seidel method can be viewed as an approach that computes GNEs directly.
Nonetheless, its convergence is not guaranteed in general.
It is interesting future work to find computationally efficient approximations for the set of GNEs that are better than the set of KKT points.

Last, we acknowledge that our proposed method may not be efficient for solving large scale GNEPs. This is because it requires solving semidefinite programming relaxations,
whose sizes grow fast when the number of variables increases.
Moreover, though the instances of $|\mc{P}|$ are much smaller than the number of all possible active sets (as shown in Table~\ref{table:numberP}), typically one still needs to enumerate a large number of sets $J$ when the number of constraints $m_i$ is relatively large (say, more than $20$).
We refer to Example~\ref{ex:random} for the numerical performance of our method with increasing values of $n_i$ and $m_i$. Generally, solving large scale GNEPs is computationally challenging.
This is important future work.

\subsection*{Acknowledgements}

This project was begun at a SQuaRE at the American Institute for Mathematics.
The authors thank AIM for providing a supportive and mathematically rich environment.
Jiawang Nie is partially supported by the NSF grant DMS-2110780 and DMS-2513254.
Xindong Tang is partially supported by the Hong Kong Research Grants Council HKBU-15303423.

\appendix
\section{GNEPs from existing references}
\label{sc:existing_refs}
\begin{example} (FKA3 \cite{FacKan10}).
\label{ex:A03}
Consider the 3-player GNEP
\begin{equation}\label{eq:convGNEP}
\text{F}_i(x_{-i}):\
\left\{\begin{array}{cl}
\min\limits_{x_i \in \mathbb{R}^{n_i}} &  \frac{1}{2} x_i^T C_i x_i + x_i^T (D_i x_{-i}+t_i)\\
\st & A_i x_i \geq b_i(x_{-i}),
\end{array}\right.
\end{equation}
where $n_1 = 3,\, n_2 = n_3 = 2$ and
\begin{equation*} \begin{gathered}
C_1 =\left[\begin{array}{rrr}20 & 5 & 3 \\ 5 & 5 & -5 \\ 3 & -5 & 15 \end{array}\right],\,\,\,\,
C_2 = \left[\begin{array}{rr} 11 & -1 \\ -1 & 9\end{array}\right],\,\,\,\,
C_3 = \left[\begin{array}{rr} 48 & 39 \\ 39 & 53\end{array}\right], \\
D_1 = \left[\begin{array}{rrrr} -6 & 10 & 11 & 20 \\ 10 & -4 & -17 & 9 \\ 15 & 8 & -22 & 21 \end{array}\right], \,\,\,\,
D_2 = \left[\begin{array}{rrrrr} 20 & 1 & -3 & 12 & 1 \\ 10 & -4 & 8 & 16 & 21 \end{array}\right], \\
D_3 = \left[\begin{array}{rrrrr} 10 & -2 & 22 & 12 & 16 \\ 9 & 19 & 21 & -4 & 20 \end{array}\right],\,\,\,\,
t_1 = \left[\begin{array}{r} 1\\ -1 \\ 1 \end{array}\right],\,\,\,\,
t_2 = \left[\begin{array}{r} 1\\ 0\end{array}\right],\,\,\,\,
t_3 = \left[\begin{array}{r} -1\\ 2 \end{array}\right].
\end{gathered}
\end{equation*}
The following are the constraints for each player.
\begin{equation*}\begin{aligned}
    \mbox{1st player}:\, &\left\{\begin{array}{l}
    -10 \cdot \mathbf{1} \leq x_1 \leq 10 \cdot \mathbf{1},\,x_{1,1}+x_{1,2}+x_{1,3} \leq 20,\\
    x_{1,1}+x_{1,2}-x_{1,3} \leq x_{2,1} - x_{3,2} +5,
    \end{array}
    \right.\\
\mbox{2nd player}:\, &
    -10 \cdot \mathbf{1} \leq x_2 \leq 10 \cdot \mathbf{1},\,x_{2,1} - x_{2,2} \leq x_{1,2} + x_{1,3} - x_{3,1} +7,\\
\mbox{3rd player}: \, &
    -10 \cdot \mathbf{1} \leq x_3 \leq 10 \cdot \mathbf{1},\,x_{3,2} \leq x_{1,1} + x_{1,3} -x_{2,1} +4.
\end{aligned}
\end{equation*}
\end{example}

\begin{example} (FKA4 \cite{FacKan10}).
\label{ex:A04}
Consider (\ref{eq:convGNEP}) with $N = 3, \,n_1 = 3,\, n_2 = n_3 = 2,$
\begin{equation*}
\begin{gathered}
C_1 = \left[\begin{array}{ccc} 20 + x_{2,1}^2 & 5 & 3 \\ 5 & 5 + x_{2,2}^2 & -5 \\ 3 & -5 & 15 \end{array}\right],\ 
C_2 = \left[\begin{array}{cc} 11 + x_{3,1}^2 & -1 \\ -1 & 9\end{array}\right],\\
C_3 = \left[\begin{array}{cc} 48 & 39 \\ 39 & 53 +x_{1,1}^2\end{array}\right], 
\end{gathered}
\end{equation*}
and $D_i$ and $t_i$ are the same as Example \ref{ex:A03}.
The following are the constraints:
\[\begin{aligned}
    \mbox{1st player}:\, & \left\{\begin{array}{l}
    \mathbf{1} \leq x_1 \leq 10 \cdot \mathbf{1},\,x_{1,1}+x_{1,2}+x_{1,3} \leq 20,\\
    x_{1,1}+x_{1,2}-x_{1,3} \leq x_{2,1} - x_{3,2} +5,
    \end{array}
    \right.\\
\mbox{2nd player}:\, & \mathbf{1} \leq x_2 \leq 10 \cdot \mathbf{1},\,x_{2,1} - x_{2,2} \leq x_{1,2} + x_{1,3} - x_{3,1} +7,\\
\mbox{3rd player}:\, &  \mathbf{1} \leq x_3 \leq 10 \cdot \mathbf{1},\,x_{3,2} \leq x_{1,1} + x_{1,3} -x_{2,1} +4.
\end{aligned}\]
\end{example}

\begin{example} (FKA8 \cite{ facchinei2009generalized,FacKan10}).
\label{ex:A08}
Consider the 3-player GNEP
\begin{equation*}
{\small \begin{array}{clcclccl}
\underset{x_1 \in \mathbb{R}^1}{\min} & -x_1 & \vline & \underset{x_2 \in \mathbb{R}^1}{\text{min}} & (x_2-0.5)^2 & \vline & \underset{x_3 \in \mathbb{R}^1}{\text{min}} & (x_3-1.5x_1)^2 \\
\st & x_3 \leq x_1+x_2 \leq 1, & \vline &\st & x_3 \leq x_1+x_2 \leq 1, & \vline &\st & 0 \leq x_3 \leq 2, \\ & 0 \leq 2x_1 \leq x_3, & \vline & & x_2 \geq 0, & \vline & &  2x_3 \geq x_1 + 2x_2.
\end{array}}
\end{equation*}
The original problem in \cite{facchinei2009generalized} has infinitely many KKT points,
so we added extra constraints to the first and third players' optimization
so that the KKT set is finite.
\end{example}

\begin{example} (NT59 \cite{Nie2020nash}).
\label{ex:F59}
Consider the environmental pollution control problem for $N = 3$ countries in the case \textit{autarky} \cite{Breton}.
Let $x_{i,1}\,(i = 1,2,3)$ denote the (gross) emissions from country $i$.
The revenue of each country depends on its own emissions.
The variable $x_{i,2}$ represents the investment by country $i$ in local environmental projects.
Set these variables together with net emissions and damage costs.
The optimization problem for each country is formulated as follows:
\begin{equation*}
\text{F}_i(x_{-i}) : \left\{\begin{array}{cl}
\underset{x_i \in \mathbb{R}^2}{\text{min}} &  -x_{i,1}\left(b_i - \frac{1}{2} x_{i,1} \right) + \frac{x_{i,2}^2}{2} + d_i (x_{i,1}-\gamma_i x_{i,2}) + \sum\limits_{j \ne i} c_{i,j} x_{i,2} x_{j,1}\\
\st & x_{i,2} \geq 0, \, x_{i,1} \leq b_i,\, 0 \leq x_{i,1}- \gamma_i x_{i,2} \leq E_i,
\end{array}\right.
\end{equation*}
where the parameters are set as
\begin{equation*}
\begin{array}{llllll}
  b_1 = 1.5,& b_2 = 2,& b_3 = 1.8, & c_{1,2} = 0.2, & c_{1,3} = 0.3, & c_{2,1} = 0.4, \\
  c_{2,3} = 0.2, & c_{3,1} = 0.5, & c_{3,2} = 0.1, & d_1 = 0.8, & d_2 = 1.2, & d_3 = 1.0, \\
  E_1 = 3, & E_2 = 4, & E_3 = 2, & \gamma_1 = 0.7, & \gamma_2 = 0.5, & \gamma_3  = 0.9.
\end{array}
\end{equation*}
\end{example}

\begin{example} (NT510 \cite{Nie2020nash}).
\label{ex:F510}
Consider the electricity market problem in \cite{FacKan10} with $N=3$ generating companies.
Each company $i$ possesses $i$ generating units, and the power generation of its $j$th generating unit is denoted by $x_{i,j}$.
Taking into account the maximum generation capacity, the operating cost of each unit, and the electricity price, each company aims to maximize its own profit.
Under this setting, the optimization problem for company $i$ can be formulated as follows:
\begin{equation*}
\text{F}_i(x_{-i}): \left\{\begin{array}{cl}
\min\limits_{x_i \in \mathbb{R}^i} & \sum\limits_{j=1}^i \left( \frac{1}{2}c_{i,j}  x_{i,j}^2 - d_{i,j}  x_{i,j} \right) - (10 - \mathbf{1}^Tx) \mathbf{1}^T x_i  \\
\st & 0 \leq x_{i,j} \leq E_{i,j} \,\, (\forall j \in [i]),
\end{array}\right.
\end{equation*}
where the parameters are set as
\begin{equation*}
\begin{array}{llllll}
  c_{1,1} = 0.4,& c_{2,1} = 0.35,& c_{2,2} = 0.35, & c_{3,1} = 0.46, & c_{3,2} = 0.5, & c_{3,3} = 0.5, \\
  d_{1,1} = 2, & d_{2,1} = 1.25, & d_{2,2} = 1, & d_{3,1} = 2.25, & d_{3,2} = 3, & d_{3,3} = 3, \\
  E_{1,1} = 2, & E_{2,1} = 2.5, & E_{2,2} = 0.67, & E_{3,1} = 1.2, & E_{3,2} = 1.8, & E_{3,3}  = 1.6.
\end{array}
\end{equation*}
\end{example}

\begin{example}(NTGS53 \cite{Nie2020gs}).
\label{ex:GS53}
Consider the 2-player GNEP
\begin{equation*}
{\small \begin{array}{clccl}
\underset{x_1 \in \mathbb{R}^2}{\text{min}} & x_{1,1}(x_{1,2} + 2x_{2,1} +2x_{2,2}) & \vline & \underset{x_2 \in \mathbb{R}^2}{\text{min}} & \|x_1\|^2 - \|x_2\|^2 \\
& +x_{1,2}(x_{2,1}+x_{2,2}) + 2x_{2,1}x_{2,2} & \vline & & \\
\st & \mathbf{1}^T x = 1,\,\, x_1 \geq 0,\,\, 2 \cdot \mathbf{1}^Tx_1 \geq 1, & \vline & \st & \mathbf{1}^T x = 1,\,\, x_2 \geq 0, \,\, \mathbf{1}^Tx_2 \geq \mathbf{1}^Tx_1.
\end{array}}
\end{equation*}
The original problem in \cite{Nie2020gs} has infinitely many KKT points,
so we added extra constraints to each player's optimization
so that the KKT set is finite.
\end{example}

\begin{example}(NTGS54 \cite{Nie2020gs}).
\label{ex:GS54}
Consider the 2-player GNEP
\begin{equation*}
\begin{array}{clccl}
\underset{x_1 \in \mathbb{R}^2}{\text{min}} & -2x_{1,2}^2 + x_{2,1}x_{1,2} + x_{1,1}x_{2,1} & \vline & \underset{x_2 \in \mathbb{R}^2}{\text{min}} & \|x_2\|^2 -2x_{2,2} \cdot \mathbf{1}^Tx_1 \\
\st & \mathbf{1}^T x = 1,\,\, x_{1,1} \geq 0.1,\,\, x_{1,2} \geq 0.1, & \vline &\st & \mathbf{1}^T x = 1,\,\, x_{2} \geq 0.1,  \\
 & x_1 \geq x_2,  & \vline&  & x_2 \geq x_1, \,\, 0.2 \geq x_{1,1} + x_{2,1}.
\end{array}
\end{equation*}
The original problem in \cite{Nie2020gs} has infinitely many KKT points,
so we added extra constraints to each player's optimization
so that the KKT set is finite.
\end{example}

\begin{example} (FR33 \cite{Feinstein2023}).
\label{ex:FR33}
Consider the 2-player GNEP
\begin{equation*}
\text{F}_i(x_{-i}) : \left\{\begin{array}{cl}
\underset{x_i \in \mathbb{R}^2}{\text{min}} &  -2x_{i,1} - (2i-1)x_{i,2}\\
\st & x_i \in X_i(x_{-i}),
\end{array}\right.
\end{equation*}
where the constraining sets are
\begin{equation*}
\begin{gathered}
    X_1(x_2) = \left\{x_1 \in \mathbb{R}^2 \left| \begin{array}{l}
    0 \leq x_{1,1} \leq 5, \,\, 0 \leq x_{1,2} \leq 2.5,\,\, x_{1,1}+2x_{1,2} \leq 5,\\
    4x_{1,1}+x_{1,2}-\frac{16}{3}x_{2,1}-\frac{1}{3}x_{2,2}\leq 0.
    \end{array} \right.\right\},\\
    X_2(x_1) = \left\{x_2 \in \mathbb{R}^2 \left| \begin{array}{l}
    0 \leq x_{2,1} \leq 1.5, \,\, 0 \leq x_{2,2} \leq 6,\,\, 4x_{2,1}+x_{2,2} \leq 6,\\
15x_{1,1}-10x_{1,2}+x_{2,1}+2x_{2,2}\leq 0.
    \end{array} \right.\right\}.
\end{gathered}
\end{equation*}
\end{example}

\begin{example} (SAG41 \cite{Sagratella2019}). Consider the 2-player NEP
\label{ex:SAG41}
\begin{equation*}
\text{F}_i(x_{-i}) : \left\{\begin{array}{cl}
\underset{x_i \in \mathbb{R}^2}{\text{min}} &  4x_{i,1}^2 +(-1)^{i+1}2x_{1,1}x_{2,1}-\alpha_i x_{i,1} + \beta_i x_{i,2}\\
\st & x_{i,1} - x_{i,2} \leq 0,\,\, 0 \leq x_{i,1} \leq 1,\,\, 0 \leq x_{i,2} \leq 1,
\end{array}\right.
\end{equation*}
where $\alpha_1 = 10, \alpha_2 = 8, \beta_1 = 5$, and $\beta_2 = 7$.
\end{example}

\end{document}